\documentclass[11pt,letterpaper]{amsart}

\usepackage{fancyhdr}
\usepackage{bm}
\usepackage{hyperref}
\hypersetup{hidelinks}
\usepackage{graphicx} 
\usepackage{nicematrix}
\usepackage{amsthm,amssymb,amsmath}
\usepackage[T1]{fontenc}
\usepackage[utf8]{inputenc}
\usepackage{hyperref}
\usepackage{lipsum}
\usepackage{mathrsfs}
\usepackage{enumitem}
\usepackage{stmaryrd}
\usepackage{setspace}
\usepackage{yfonts}
\usepackage{float}
\usepackage{mathtools}
\usepackage{parskip}
\usepackage[all,cmtip]{xy}
\usepackage{tikz-cd}
\tikzcdset{row sep/normal=50pt, column sep/normal=50pt}

\newtheorem{lemma}{Lemma}[section]

\newtheorem{remark}[lemma]{Remark}
\newtheorem{theorem}[lemma]{Theorem}
\newtheorem{theorem*}{Theorem}
\newtheorem{example}[lemma]{Example}
\newtheorem{example*}[lemma]{Example}

\newtheorem{claim}[lemma]{Claim}

\newtheorem{manualtheoreminner}{Theorem}
\newenvironment{manualtheorem}[1]{%
\renewcommand\themanualtheoreminner{#1}%
  \manualtheoreminner
}{\endmanualtheoreminner}
\newtheorem{manualprobleminner}{Problem}
\newenvironment{manualproblem}[1]{%
  \renewcommand\themanualprobleminner{#1}%
  \manualprobleminner
}{\endmanualprobleminner}
\newtheorem{manualsupplementinner}{Supplement}
\newenvironment{manualsupplement}[1]{%
  \renewcommand\themanualsupplementinner{#1}%
  \manualsupplementinner
}{\endmanualsupplementinner}

\newtheorem{manualcorollaryinner}{Corollary}
\newenvironment{manualcorollary}[1]{%
  \renewcommand\themanualcorollaryinner{#1}%
  \manualcorollaryinner
}{\endmanualcorollaryinner}
\newtheorem{manualclaiminner}{Claim}
\newenvironment{manualclaim}[1]{%
  \renewcommand\themanualclaiminner{#1}%
  \manualclaiminner
}{\endmanualclaiminner}

\usepackage[margin=1.1in]{geometry}
\usepackage{blindtext}

\usepackage[backend=biber, style=numeric, sorting=nyt]{biblatex}
\title{Torelli theorems for Quot schemes of vector bundles on curves}
\author{\small{Ashima Bansal, Supravat Sarkar, Shivam Vats}}
\date{}
\begin{document}

\begin{abstract}
For a vector bundle $E$ on a smooth projective curve $C$, one defines the Quot scheme $Q_d(E,C)$ parametrizing subsheaves of $E$ having length $d$ torsion quotients. If $E_i$ is a vector bundle of rank $r\geq 2$ on a smooth projective curve $C_i$ for $i=1,2$, isomorphisms between $\mathbb{P}_{C_i}(E_i)$  preserving the projective bundle structures induce isomorphisms of $Q_d(E_i,C_i)$, called natural isomorphisms. We show that for $d\geq 2$ all isomorphisms between $Q_d(E_i,C_i)$ are natural, except for a non-natural involution of $Q_2(\mathcal{O}_C^{\oplus r},C)$. This gives a complete description of the automorphism group of $Q_d(E,C)$, generalizing previous works of Biswas-Dhillon-Hurtubise and Gangopadhyay. This also shows that one can reconstruct the curve from the Quot scheme, a Torelli-type theorem. As a key step in our proof, we show that any isomorphism between symmetric powers of smooth projective curves preserving big diagonals is natural, which is interesting in its own right.
\end{abstract}
\maketitle
\markboth
{A. Bansal, S. Sarkar, and S. Vats}
{Torelli theorems for Quot schemes}
\begin{center}
\textbf{Keywords}: Quot scheme, symmetric power, automorphism
\end{center}
\begin{center}
\textbf{2020 Mathematics Subject Classification: 14C05, 14J50, 14C34} 
\end{center}

\section{Introduction}
We work throughout over the field $\mathbb{C}$ of complex numbers. For a smooth projective variety $X$ and a positive integer $m$, one defines $X^{[m]}$ to be the Hilbert scheme of length $m$ subschemes of $X$, and $X^{(m)}$ to be the $m$-th symmetric power of $X$, that is, the quotient of the product $X^m$ under the natural action of the symmetric group $S_m$. The study of automorphisms of $X^{[m]}$ and $X^{(m)}$ is a fruitful area of research. Any isomorphism $h:X\to Y$ of smooth projective varieties induces isomorphisms $h^{[m]}:X^{[m]}\to Y^{[m]}$ and $h^{(m)}:X^{(m)}\to Y^{(m)}$, these are called \textit{natural} isomorphisms. The motivating problem is the following.
\begin{manualproblem}{1}\label{1}
Given smooth projective varieties $X$ and $Y$, and a positive integer $m$, characterize all isomorphisms $X^{[m]}\,\to\, Y^{[m]}$ and $X^{(m)}\,\to\, Y^{(m)}$. Determine under what conditions every such isomorphism is natural.
\end{manualproblem}
On the one hand, when $X\,=\,Y$, this problem asks to describe the automorphisms of $X^{[m]}$ and $X^{(m)}$. On the other hand, solutions to the second part of this problem would give rise to Torelli-type theorems: one can recover $X$ from $X^{[m]}$ or $X^{(m)}$ under certain conditions.

Problem \ref{1} has been extensively studied in the literature when $X$ and $Y$ are surfaces, see \cite{belmans2020automorphisms}, \cite{Og}, \cite{Ha}, \cite{hayashi2015universal}, \cite{Sas}, \cite{Gir}, \cite{boissiere2012note} and \cite{bansal2025automorphisms}. In a few cases Problem \ref{1} or its variants are also studied for higher dimensional varieties, see \cite{Sh}, \cite{Wa}, \cite{bansal2025automorphisms} and \cite[Corollary 4.2]{BansalSarkarVatsSymmetricPower}.

Now consider the case when $X$ and $Y$ are curves. In this case the Hilbert scheme coincides with the symmetric power. As the irregularity of $X^{(m)}$ is the genus of $X$, existence of an isomorphism of $X^{(m)}$ and $Y^{(m)}$ implies that $X$ and $Y$ have the same genus, say $g$. Except for the case $g\,=\,m\,=\,2$, \cite[Theorem A.1]{biswas2017automorphisms} shows $X\cong Y$, so Problem \ref{1} essentially asks to characterize automorphisms of $X^{(m)}$. This problem is completely settled for $g\neq 2$ by \cite[Proposition 1.5]{CataneseCiliberto1993}, \cite{ciliberto1993symmetric}, \cite{martens1965extended}, \cite{Ran1986Martens} and \cite{weil2009beweis}. A concise answer is the following. For $g\,=\,0$, we have $X^{(m)}\cong \mathbb{P}^m$, whose automorphisms are well-known. For $g\neq 0,2$ and $m\,\neq\, g-1$, every automorphism of $X^{(m)}$ is natural. For $g\geq 3$ and $X$ hyperelliptic, every automorphism of $X^{(g-1)}$ is also natural. Finally, for $g\,\geq\, 3$ and $X$ non-hyperelliptic, $X^{(g-1)}$ has a non-natural involution, such that every automorphism of $X^{(g-1)}$ is either natural or a composition of that involution with a natural automorphism. For $g=2$, there are non-natural automorphisms in some cases, and the problem is not completely settled. In fact, for $g\,=\,m\,=\,2$, there are non-isomorphic curves with isomorphic second symmetric powers.

For a smooth projective curve $X$, one has the \textit{big diagonal} in $X^{(m)}$, parametrizing the non-reduced divisors. Any natural isomorphism $X^{(m)}\,\to\, Y^{(m)}$ must preserve the big diagonals, in the sense that it maps the big diagonal of $X^{(m)}$ onto the  big diagonal of $Y^{(m)}$. Our first result proves the converse of this, hence completely settles Problem \ref{1} under the assumption of preservation of big diagonals.
\begin{manualtheorem}{A}\label{A}
Let \(C_1\) and \(C_2\) be smooth projective curves and \(d\) a positive integer. Suppose $\varphi\, \colon\, C_1^{(d)} \,\longrightarrow\, C_2^{(d)}$
is an isomorphism preserving the big diagonals. Then \(\varphi\) is natural.
\end{manualtheorem}

One can similarly define big diagonals in $X^{[m]}$ and $X^{(m)}$ for any dimensional $X$. As the big diagonal is the singular locus of $X^{(m)}$ for $\dim X\geq 2$, preservation of big diagonal is automatic when studying Problem \ref{1} for symmetric powers of surfaces and higher dimensional varieties. In the study of Problem \ref{1} for Hilbert schemes of points on surfaces and higher dimensional varieties, preservation of big diagonals was a central assumption in several works,  \cite{bansal2025automorphisms}, \cite{boissiere2012note} and \cite{hayashi2015universal} for example.

For a smooth projective curve $C$, the symmetric power $C^{(m)}$ is an instance of the more general object: Quot scheme $Q_d(E,C)$ of degree $d$ torsion quotients of a vector bundle $E$ on $C$, see \S 2.4 for more details. Given vector bundles $E_i$ on smooth projective curves $C_i$ for $i=1,2$, one defines the notion of \textit{natural} isomorphisms $Q_d(E_1,C_1)\to Q_d(E_2,C_2)$ as follows. Suppose we are given an isomorphism $h:C_1\to C_2$, a line bundle $L$ on $C_1$ and an isomorphism of vector bundles $E_1\to h^*E_2\otimes L$ on $C_1$. As is well-known (see \cite[Exercise II.7.10(d)]{hartshorne2013algebraic}), this is the same data as that of an isomorphism $\varphi:\mathbb{P}_{C_1}(E_1)\to \mathbb{P}_{C_2}(E_2)$ preserving the projective bundle structures, in the sense that $\varphi$ descends to an isomorphism $C_1\to C_2$. Given this data, pullback by $h$ and the natural identification $Q_d(h^*E_2\otimes L,C_1)=Q_d(h^*E_2,C_1)$ induces an isomorphism $Q_d(E_1,C_1)\to Q_d(E_2,C_2)$. This is called a natural isomorphism.

So, in the context of Quot schemes, the analogue of Problem \ref{1} is the following.
\begin{manualproblem}{2}\label{2}
   Given vector bundles $E_i$ of rank $r\geq 2$ on smooth projective curves $C_i$ for $i=1,2$, characterize all isomorphisms $Q_d(E_1,C_1)\to Q_d(E_2,C_2)$. Determine under what conditions every such isomorphism is natural.
\end{manualproblem}

One of the goals of this paper is to completely settle Problem \ref{2}. Our result is the following.
\begin{manualtheorem}{B}\label{B}
Let \(C_1\) and \(C_2\) be smooth projective curves, \(E_i\) a vector bundle on \(C_i\) of rank \(r\geq 2\), and \(d\geq 2\) an integer. Suppose $\sigma\colon Q_d(E_1,C_1)\longrightarrow Q_d(E_2,C_2)$
is an isomorphism. Then one of the following holds:
\begin{enumerate}
     \item \(\sigma\) is natural,
    
    \item \(d=2\), each \(E_i\) is trivial up to line bundle twist, and $\sigma\circ \alpha_{C_1,r}$
    is natural.
\end{enumerate}
\end{manualtheorem}
Here $\alpha_{C_1,r}$ is a non-natural involution of $Q_2(\mathcal{O}_{C_1}^{\oplus r},C_1),$ see \S 2.7 for its definition. We emphasize that here we are not making any assumption like preservation of big diagonals as in Theorem \ref{A}. We also characterize when $Q_{d_i}(E_i,C_i)$'s can be isomorphic for $d_1\neq d_2.$
\begin{manualsupplement}{B}\label{suppB}
Let $d_i$ and $r_i$ be positive integers and $C_i$ a smooth projective curve for $i=1,2$. Suppose $d_1<d_2$. Then the following are equivalent:

\begin{enumerate}
    \item There are vector bundles $E_i$ on $C_i$ of rank $r_i$ with $Q_{d_1}(E_1,C_1)\cong Q_{d_2}(E_2,C_2).$

    \item $d_1=r_2=1$, $d_2=r_1$, and $C_1\cong C_2$ are elliptic curves.
\end{enumerate}

\end{manualsupplement}
Note that Theorem \ref{B} gives a Torelli-type theorem, stating that $C$ can be reconstructed from $Q_d(E,C)$: if rank $E_i\geq 2$ and $Q_d(E_1,C_1)\cong Q_d(E_2,C_2)$, then $C_1\cong C_2$. A special case of this was proven in \cite[Theorem 1.2]{BiswasDhillonHurtubise2015}.

As an application of Theorem \ref{B}, we can completely
describe the automorphisms of $Q_d(E,C)$ in terms of the automorphisms
of $\mathbb{P}_C(E)$. We introduce the following notation to state the
result. For a smooth projective variety $X$, we denote its automorphism group by $\operatorname{Aut}(X)$ and the identity component of the automorphism group scheme by $\operatorname{Aut}^{0}(X)$. For a vector bundle $E$ of rank $r \geq 2$ on a smooth projective
curve $C$, let $\operatorname{Aut}_{\ast}(\mathbb{P}_C(E))$ be the
group of automorphisms of $\mathbb{P}_C(E)$ preserving the projective
bundle structure, that is, which descend to automorphisms of $C$.
Since any map $\mathbb{P}^{r-1}\to C$ is constant except if
$r=2$ and $C\cong\mathbb{P}^1$, one easily sees that $\operatorname{Aut}_{\ast}(\mathbb{P}_C(E))
=
\operatorname{Aut}(\mathbb{P}_C(E)),$
except when $\mathbb{P}_C(E)\cong\mathbb{P}^1\times\mathbb{P}^1$.

As we described, $\operatorname{Aut}_{\ast}(\mathbb{P}_C(E))$ embeds
in $\operatorname{Aut}(Q_d(E,C))$ as the subgroup of natural
automorphisms. Our result is the following.

\begin{manualcorollary}{B}\label{aut}

Let $E$ be a vector bundle of rank $r\geq 2$ on a smooth projective
curve $C$, and $d\geq 2$ an integer. Then the following hold:
\begin{enumerate}
    \item If $d=2$ and $E$ is trivial up to a line bundle twist, then $
    \operatorname{Aut}(Q_d(E,C))$ is a direct product   $\operatorname{Aut}_{\ast}(\mathbb{P}_C(E))
    \times \mathbb{Z}/2\mathbb{Z},$
    where the $\mathbb{Z}/2\mathbb{Z}$-factor is generated by
    $\alpha_{C,r}$.

    \item In all other cases, $    \operatorname{Aut}(Q_d(E,C))
    =
    \operatorname{Aut}_{\ast}(\mathbb{P}_C(E)).$
\end{enumerate}
In particular, we always have $\operatorname{Aut}^{0}(Q_d(E,C))
=
\operatorname{Aut}^{0}(\mathbb{P}_C(E)).$
\end{manualcorollary}

The last statement of Corollary \ref{aut} was studied in the literature and was proved in several special cases. \cite[Theorem 1.1]{BiswasDhillonHurtubise2015} proves it when $E$ is trivial and $C$ has genus $\geq 2$. \cite{Gangopadhyay2019} proves it unconditionally for $r\geq 3$, and for $r=2$ under the additional hypothesis that $E$ is semistable and $C$ has genus $\geq 2$. So our result generalizes these previous works.

If $E$ is a nonzero vector bundle on an elliptic curve $C$, then $C$ is the Albanese variety of $Q_d(E, C)$, see Lemma \ref{smooth albanese}. Analogous to Kummer-type varieties, one can define the Kummer-Quot scheme to be the fibre over $0$ of the Albanese map of $Q_d(E, C)$. Similar proofs as in Theorems \ref{A}, \ref{B} give Torelli-type theorems and naturality of isomorphisms of Kummer-Quot schemes. In \S 5 we state these results and outline the proofs.
\section{Preliminaries}
The following notation will be used:
\begin{itemize}
    \item For a coherent sheaf $\mathcal{F}$ on a variety $X$, and a point $x\in X$, $\mathcal{F}(x)=\mathcal{F}\otimes k(x)$ denotes the fibre of $\mathcal{F}$ over $x$, a finite dimensional $k(x)$-vector space. For a map $\mathcal{F}\xrightarrow{\varphi}\mathcal{G}$ of coherent sheaves on $X$, we thus get an induced $k(x)$-linear map $\mathcal{F}(x)\xrightarrow{\varphi(x)}\mathcal{G}(x)$.
    \item For a smooth variety $X$, its tangent bundle will be denoted by $T_X$. For a morphism $f:X\to Y$ of varieties, the kernel of $df:T_X\to f^*T_Y$, will be denoted by $T_f.$ This is the so-called relative tangent bundle when $f$ is smooth.
    \item For a degree $d$ effective divisor $D$ on a smooth projective curve $C$, sometimes to avoid confusion we will denote the corresponding point of $C^{(d)}$ by $[D]$.
    \item The genus of a smooth projective curve $C$ will be denoted by $g(C).$
    \item A proper morphism between normal varieties $f:X\to Y$ is called a \textit{contraction} if $f_*\mathcal{O}_X=\mathcal{O}_Y$. This is equivalent to the assertion that $f$ has connected fibres.
    \item For a smooth projective variety $X$, we denote its $i$-th Betti number by $b_i(X)$.
    \item 
Let $C$ be an elliptic curve. A vector bundle $E$ on $C$ is said to be \textit{semihomogeneous} if for all $x\in C$, there is a line bundle $L_x$ on $C$ such that $t_x^*E\cong E\otimes L_x,$
where $t_x$ is the translation by $x$. This is equivalent to the assertion that the natural map $\operatorname{Aut}^0(\mathbb P_C(E))
\longrightarrow
\operatorname{Aut}^0(C)$
is surjective.

\end{itemize}
In the following subsections, we define some notions and prove some lemmas which will be used in the proofs.
\subsection{Sheaf of Lie algebras}
 Let \(R\) be a sheaf of commutative rings on a topological space \(X\). An $R$-module $F$ is called a sheaf of Lie algebras if \(F\) is endowed with an \(R\)-bilinear map $F\otimes_R F \longrightarrow F,$
denoted by $s\otimes t\mapsto
[s,t]$
on local sections, such that \([\,\cdot,\cdot\,]\) satisfies the properties of the Lie bracket. Equivalently, for every open set \(U\subseteq X\), \(F(U)\) together with the bracket, forms a Lie algebra. The bracket is itself called the Lie bracket.

The \textit{centroid} of \(F\) is the subsheaf $
\operatorname{Cent}_R(F)\subseteq \operatorname{\mathcal{E}nd}_R(F)$
consisting of local sections \(\varphi\) of \(\operatorname{\mathcal{E}nd}_R(F)\) satisfying
\[
\varphi([a,b])=[\varphi(a),b]=[a,\varphi(b)]
\]
for all local sections \(a\) and \(b\) of \(F\). Note that
\(\operatorname{Cent}_R(F)\) is an \(R\)-algebra, possibly noncommutative.

If \(X\) is a variety, unless otherwise stated, when we say that
``\(F\) is a sheaf of Lie algebras on \(X\)'', it will be assumed that
\(R=\mathcal{O}_X\) and \(F\) is a coherent \(\mathcal{O}_X\)-module.
Similarly, \(\operatorname{Cent}(F)\) will mean
\(\operatorname{Cent}_{\mathcal{O}_X}(F)\). An isomorphism of sheaves of
Lie algebras $F_1\longrightarrow F_2$
will mean an isomorphism of \(\mathcal{O}_X\)-modules preserving the Lie
brackets.

A sheaf $F$ of Lie algebras on a variety $X$  is called \emph{perfect} if the bracket map $F\otimes_{\mathcal{O}_X} F \longrightarrow F$
is surjective. In that case, one easily sees that \(\operatorname{Cent}(F)\) is commutative, \(F\) has a \(\operatorname{Cent}(F)\)-module structure, and the Lie bracket is \(\operatorname{Cent}(F)\)-bilinear (see Section 2.1 of \cite[Section 2.1]{BenkartNeher}).

\begin{example}
If \(E\) is a nonzero vector bundle on a variety \(X\), then
\(\operatorname{\mathcal{E}nd}(E)\) is a sheaf of Lie algebras under the Lie bracket. As in \cite[Section 3]{BiswasGangopadhyaySebastian}, let
\[
\operatorname{ad}(E)
:=
\operatorname{\mathcal{E}nd}(E)/\mathcal{O}_X,
\]
a sheaf of Lie algebras on $X$  isomorphic to $\mathfrak{sl}(E)$, the sheaf of
Lie algebras consisting of trace-zero endomorphisms. We have a direct sum decomposition $\operatorname{\mathcal{E}nd}(E)\cong \mathcal{O}_X\oplus \operatorname{ad}(E)$, where $\mathcal{O}_X$ has zero Lie bracket, see \cite[Page 4]{Sarkar2026Automorphisms}.
    
\end{example}
\subsection{The adjoint Lie algebra}
For a vector space $V$ of dimension $r\geq 2$, the Lie algebra of $\operatorname{Aut}(\mathbb{P}(V))=\operatorname{PGL}(V)$
is $\operatorname{ad}(V)
:=
\frac{\operatorname{End}(V)}
     {\mathbb{C}\cdot\operatorname{id}},$
and is also isomorphic to $H^0\bigl(\mathbb{P}(V),T_{\mathbb{P}(V)}\bigr).$
 Given an isomorphism $\varphi:\mathbb{P}(V)\longrightarrow\mathbb{P}(W),$
the induced map
\[
H^0\bigl(\mathbb{P}(V),T_{\mathbb{P}(V)}\bigr)
\longrightarrow
H^0\bigl(\mathbb{P}(W),T_{\mathbb{P}(W)}\bigr)
\]
can therefore be regarded as an isomorphism of Lie algebras
\[
\operatorname{ad}_{\varphi}:
\operatorname{ad}(V)\longrightarrow\operatorname{ad}(W).
\]

So, $\varphi\mapsto ad_{\varphi}$ gives a map
\begin{equation}\label{injective}
\operatorname{Iso}\bigl(\mathbb{P}(V),\mathbb{P}(W)\bigr)
\longrightarrow
\operatorname{Iso}\bigl(\operatorname{ad}(V),\operatorname{ad}(W)\bigr),
\end{equation}
where \(\operatorname{Iso}(\mathbb{P}(V),\mathbb{P}(W))\) is the set of all
isomorphisms $\mathbb{P}(V)\longrightarrow\mathbb{P}(W),$
and \(\operatorname{Iso}(\operatorname{ad}(V),\operatorname{ad}(W))\) is the
set of all Lie algebra isomorphisms $\operatorname{ad}(V)\longrightarrow\operatorname{ad}(W).$
The map in \eqref{injective} can be shown to be injective: after identifying \(V\) and \(W\) with \(\mathbb{C}^r\), this boils down to the well-known fact that an $r\times r$ matrix commuting with all $r\times r$ matrices is a scalar matrix. An isomorphism $\operatorname{ad}(V)\longrightarrow\operatorname{ad}(W)$
is called \emph{inner} if it is in the image of this map.
\subsection{Universal divisor over symmetric power and secant bundles}
For a smooth projective curve $C$ and a positive integer $d$, let
\[
\Sigma_d(C) \hookrightarrow C \times C^{(d)}
\]
be the universal divisor, consisting of pairs $(x,D)$ with $x\leq D$. We have projections
\[
\Sigma_d(C) \xrightarrow{q} C
\qquad\text{and}\qquad
\Sigma_d(C) \xrightarrow{p} C^{(d)}.
\] For $d\geq 2$, note that
\[
C\times C^{(d-1)}\cong \Sigma_d(C)
\]
via
\[
(x,D)\longmapsto (x,x+D).
\]
Note that the only nontrivial automorphism of $\Sigma_2(C)\cong C^2$
over $C^{(2)}$ is the automorphism swapping the two factors. For $d\geq 3$, we have the following.

\begin{lemma}\label{noauto}
For $d\geq 3$, there is no nontrivial automorphism of $\Sigma_d(C)$ over
$C^{(d)}$.
\end{lemma}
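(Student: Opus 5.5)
An automorphism $\tau$ of $\Sigma_d(C)$ over $C^{(d)}$ is determined by a map $q\circ\tau\colon \Sigma_d(C)\to C$, and it must satisfy $q(\tau(x,D))\le D$ for every point $(x,D)$. My plan is to use the generic finite étale structure of $p$ together with a monodromy argument, and then extend by continuity.

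\emph{Step 1: restrict to the reduced locus.} Let $U\subset C^{(d)}$ be the complement of the big diagonal, i.e.\ the locus of reduced divisors. Over $U$ the map $p$ is finite étale of degree $d$, with fibre over $D=x_1+\dots+x_d$ the set $\{x_1,\dots,x_d\}$. The automorphism $\tau$ acts on each such fibre by a permutation. Since $p^{-1}(U)$ is dense in $\Sigma_d(C)\cong C\times C^{(d-1)}$, which is irreducible and separated, it suffices to show that $\tau$ is the identity on $p^{-1}(U)$.

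\emph{Step 2: monodromy.} The cover $p^{-1}(U)\to U$ is a quotient of the Galois cover $C^d\setminus\Delta\to U$, which has group $S_d$. Its monodromy group is therefore the full symmetric group $S_d$, acting on $\{1,\dots,d\}$ in the standard way. An automorphism of a connected finite étale cover commutes with the monodromy action on a fixed fibre. A permutation commuting with all of $S_d$ lies in the center, which is trivial for $d\ge 3$. Hence $\tau$ is the identity on the fibre over a base point. Because $p^{-1}(U)$ is connected (it is an open subset of the irreducible $C\times C^{(d-1)}$), an automorphism of the cover fixing one point is the identity.

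\emph{Alternative version of Step 2.} The same argument can be phrased via the centralizer of a point stabilizer. Deck transformations of $p^{-1}(U)\to U$ correspond to $N_{S_d}(S_{d-1})/S_{d-1}$, where $S_{d-1}$ is the stabilizer of one point. For $d\ge 3$ the subgroup $S_{d-1}$ is self-normalizing in $S_d$, so this quotient is trivial. For $d=2$, by contrast, $N(S_1)/S_1=S_2$, which matches the swap automorphism noted in the text.

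\emph{Main obstacle.} The key point is justifying that the monodromy group is the full $S_d$. This amounts to the connectedness of the $S_d$-Galois cover $C^d\setminus\Delta$, which holds because it is an open subset of the irreducible variety $C^d$. Everything else is formal.
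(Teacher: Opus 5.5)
Your proposal is correct and is essentially the paper's proof. Both restrict to the finite étale cover $p^{-1}(U)\to U$ over the complement of the big diagonal, realize it as $Y/S_{d-1}\to Y/S_d$ with $Y$ the connected ordered configuration space, and identify its automorphisms with $N_{S_d}(S_{d-1})/S_{d-1}$, which is trivial for $d\geq 3$. Your centralizer-of-monodromy phrasing is just the equivalent form of this identification, and your density argument for extending from $p^{-1}(U)$ to all of $\Sigma_d(C)$ is fine.
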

\begin{proof}
Let $U\subseteq C^{(d)}$
be the complement of the big diagonal, and $\Sigma_U:=\Sigma_d(C)\cap (C\times U).$
It suffices to show that $\Sigma_U$ has no nontrivial automorphism over $U$.

Let
\[
Y=\{(x_1,\ldots,x_d)\in C^d\mid x_i\neq x_j
\text{ for }i\neq j\},
\]
a smooth variety. We have $U=Y/S_d$
under the natural free action of $S_d$ on $Y$ by permuting
the coordinates. Also, $\Sigma_U=Y/S_{d-1},$
where $S_{d-1}\subset S_d$
is the subgroup of permutations fixing $d$. The finite
étale maps
\[
Y\longrightarrow\Sigma_U\longrightarrow U
\]
give natural inclusions
\[
\pi_1(Y)\hookrightarrow\pi_1(\Sigma_U)
\hookrightarrow\pi_1(U),
\]
where $\pi_1$ denotes the topological fundamental group. The group of
automorphisms of $\Sigma_U$ over $U$ is $
\frac{N_{\pi_1(U)}\bigl(\pi_1(\Sigma_U)\bigr)}
     {\pi_1(\Sigma_U)},$
so it suffices to show this quotient is trivial. Here for a subgroup $H$ of a group $G$, the normalizer of $H$ in $G$ is denoted by $N_G(H).$

As \(Y\to U\) is the quotient by the free action of \(S_d\), so the subgroup $\pi_1(Y)$ of $\pi_1(U)$ is normal, and
$\frac{\pi_1(U)}{\pi_1(Y)}\cong S_d.$
Similarly, $\frac{\pi_1(\Sigma_U)}{\pi_1(Y)}\cong S_{d-1}.$
Therefore
\[
\frac{N_{\pi_1(U)}\bigl(\pi_1(\Sigma_U)\bigr)}
     {\pi_1(\Sigma_U)}
\cong
\frac{N_{S_d}(S_{d-1})}{S_{d-1}},
\]
which is trivial for \(d\geq 3\).
\end{proof}

Given a vector bundle $E$ on $C$, we define the secant bundle
\[
E^{[d]}:=p_*q^*E,
\]
which is a vector bundle on $C^{(d)}$, as $p$ is finite flat.

\subsection{Quot scheme and Hilbert-Chow morphism}
For a torsion sheaf $F$ on a smooth projective curve $C$, we define a Weil divisor
\[
\operatorname{div}(F):=\sum_{P\in C}\dim(F_P)P,
\]
where $F_P$ is the stalk of $F$ at $P$. The degree of $\operatorname{div}(F)$ is the degree of $F$, which is same as the length of $F$ or $h^0(C, F).$.

Given a nonzero vector bundle $E$ on a smooth projective curve $C$, and a positive integer $d$, $Q_d(E,C)$ is the Quot scheme parametrizing coherent subsheaves $K$ of $E$ such that $E/K$ is torsion of length $d$. This is a smooth projective variety of dimension $rd$. For a line bundle $L$ on $C$, we have natural identification $Q_d(E\otimes L,C)=Q_d(E,C).$

As examples, we have $Q_d(\mathcal{O}_C,C)=C^{(d)},$
and $Q_1(E,C)=\mathbb{P}_C(E).$

We have the Hilbert--Chow morphism $Q_d(E,C)\xrightarrow{\varphi} C^{(d)}$
sending the subsheaf $K$ to $\operatorname{div}(E/K)$. See \cite{BagnarolFantechiPerroni2020} for more details. The map $\varphi$ is a flat contraction. Over the complement $U$ of the big diagonal in $C^{(d)}$, $\varphi$ is smooth with fibres isomorphic to products of projective spaces, see \cite{GangopadhyaySebastianFundamental}.

\begin{lemma}\label{Singular fibre}
Let $E$ be a vector bundle of rank $r \geq 2$ on a smooth projective curve $C$, and $d$ a positive integer. Let $
\varphi \colon Q_d(E,C) \longrightarrow C^{(d)}$
be the Hilbert-Chow morphism. Then the fibre of $\varphi$ over $z \in C^{(d)}$ is singular if and only if $z$ lies in the big diagonal.
\end{lemma}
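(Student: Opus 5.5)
The fibre $\varphi^{-1}(z)$ depends only on the local structure of $E$ near the support of $z$, and that is where I would start. Write $z=\sum_i m_iP_i$ with the $P_i$ distinct. A subsheaf $K\subseteq E$ with $\operatorname{div}(E/K)=z$ is determined by its completions at the points $P_i$. Choosing local trivializations $E_{P_i}\cong\mathcal{O}_{C,P_i}^{\oplus r}$, I expect
\[
\varphi^{-1}(z)\cong\prod_i F_{m_i},
\]
where $F_m$ is the punctual Quot scheme of length $m$ quotients of $\mathcal{O}^{\oplus r}$ supported at the origin of $\mathbb{A}^1$. This is the local description underlying the cited result of \cite{GangopadhyaySebastianFundamental}. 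A product of varieties is smooth if and only if each factor is smooth, so the lemma reduces to one statement: $F_1\cong\mathbb{P}^{r-1}$ is smooth, while $F_m$ is singular for $m\ge 2$ and $r\ge2$. If $z$ is off the big diagonal, all $m_i=1$ and the fibre is $(\mathbb{P}^{r-1})^d$, which is smooth. If $z$ is on the big diagonal, some $m_i\ge2$ and the fibre is singular.

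The main obstacle is showing that $F_m$ is singular for $m\ge2$. I plan to do this by exhibiting a point whose Zariski tangent space has dimension larger than $\dim F_m$. For the dimension, I would use that $F_m$ is irreducible of dimension $m(r-1)$. This follows from flatness of $\varphi$: every fibre has dimension $rd-d=d(r-1)$, and the fibre over $mP+\text{(reduced)}$ is $F_m\times(\mathbb{P}^{r-1})^{d-m}$. If irreducibility is inconvenient, it is enough that $\dim F_m\le m(r-1)$ at every point, which equidimensionality of the flat fibres gives.

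Next take the point $K=t\,\mathcal{O}^{\oplus r}$, so that the quotient is $Q=(\mathcal{O}/t)^{\oplus r}$. This only has length $r$, so instead I would use $K=\mathfrak{m}^{a}$-type submodules: for $m\ge2$, take
\[
K=t\mathcal{O}^{\oplus 2}\oplus\mathcal{O}^{\oplus(r-2)}
\]
adjusted so that the colength is $m$. The simplest choice is $K=t^{m-1}\mathcal{O}\oplus t\mathcal{O}\oplus\mathcal{O}^{\oplus r-2}$, which has colength $m$. The tangent space to the whole Quot scheme at $K$ is $\operatorname{Hom}(K,E/K)$, which has dimension $rm$. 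The tangent space to the fibre is the kernel of $d\varphi$, that is, the deformations preserving the support divisor. I would compute this kernel directly, perhaps more cleanly by using the $\mathbb{G}_m$-action scaling $t$ together with the torus acting on the summands, and show it has dimension $>m(r-1)$.

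A cleaner route to the same conclusion is the following. The map $\varphi$ restricted near $K$ is flat with target smooth of dimension $d$. Since $T_K Q$ has dimension exactly $rd$, the scheme-theoretic fibre is smooth at $K$ iff $d\varphi_K$ is surjective. So it suffices to show $d\varphi_K=0$ in the direction of moving the point $P$ apart in ways that split $mP$. Concretely, I would check that the image of $d\varphi_K$ has dimension at most $d-1$ at a fixed point of the torus action. Weight considerations show that the images of the torus-fixed tangent vectors land only in the direction translating $P$ as a whole, never in the directions separating the points of $mP$.

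One point needs care: the fibre should be taken with its reduced structure if the lemma means the variety. In that case I would instead argue via the tangent-dimension count above, comparing $\dim\ker d\varphi_K$ with $m(r-1)$. With $r\ge2$, the summands $t^{m-1}\mathcal{O}$ and $t\mathcal{O}$ produce extra infinitesimal deformations, such as Hom's from $t\mathcal{O}$ into $\mathcal{O}/t^{m-1}$, that stay inside the punctual locus. This is the computation I expect to take the most work.
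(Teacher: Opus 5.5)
Your reduction is the same as the paper's. The decomposition $\varphi^{-1}(z)\cong\prod_i F_{m_i}$ is \cite[Lemma 6.5]{GangopadhyaySebastianFundamental}, and smoothness off the big diagonal is \cite[Lemma 3.3]{GangopadhyaySebastianFundamental}. The real content is that $F_m$ is singular for $m,r\geq 2$, which the paper quotes from \cite[Theorem 1.1(4)]{ItoPunctualQuot}.

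Replacing that citation by a tangent-space count at $K=t^{m-1}\mathcal{O}e_1\oplus t\mathcal{O}e_2\oplus\mathcal{O}^{\oplus(r-2)}$ is a good choice of point. However, the decisive step, that $d\varphi_K$ is not surjective, is never carried out, and the justification you sketch is wrong: for $m\geq 3$ the image is not just the translation direction.

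Identify the tangent space at $mP$ with $\mathcal{O}/t^m$ via $s\leftrightarrow(t^m+\epsilon s)$. For $\phi\in\operatorname{Hom}(K,F)$, let $a_i\in\mathcal{O}^{\oplus r}$ lift the image under $\phi$ of the $i$-th element of the basis $t^{m-1}e_1,\ te_2,\ e_3,\dots,e_r$ of $K$, and let $a_{ii}$ be its $i$-th coordinate. Then the deformed basis has determinant $t^m+\epsilon\,(t\,a_{11}+t^{m-1}a_{22})$ up to a unit of $\mathcal{O}[\epsilon]$, and $a_{11}$ ranges over all of $\mathcal{O}/t^{m-1}$. So the image of $d\varphi_K$ is exactly $t\mathcal{O}/t^m$. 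It has dimension $m-1$, contains splitting directions such as $s=t$, and misses only $s=1$.

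The correct weight argument is this: the part of $\operatorname{Hom}(K,F)$ invariant under the torus scaling the summands has $t$-degrees $-1,-1,-2,\dots,-(m-1)$, while the target has degrees $-1,\dots,-m$. So degree $-m$ is never hit. Taking $E/K$ cyclic at the other points of $z$, the Zariski tangent space of the fibre at $K$ has dimension $d(r-1)+1$. This exceeds the dimension $d(r-1)$ that flatness forces. It is exactly the computation behind the Remark after the lemma: the rank of $d\varphi$ equals the length of the scheme-theoretic support of $E/K$.

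Second, your fallback for the reduced structure is circular. $\ker d\varphi_K$ is the tangent space of the scheme-theoretic fibre, which can be strictly larger than that of $\varphi^{-1}(z)_{\mathrm{red}}$. So neither comparing its dimension with $m(r-1)$ nor exhibiting vectors in it says anything about the reduced fibre. This matters, because in Claim~\ref{descend} the paper needs that $\varphi_2^{-1}(w)_{\mathrm{red}}\cong\mathbb{P}^d$ is impossible.

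The missing idea is that the fibres are reduced:
\begin{enumerate}
\item Since $\varphi$ is flat between smooth varieties, a regular system of parameters at $z$ pulls back to a regular sequence. So each fibre is a local complete intersection, hence Cohen--Macaulay.
\item The fibre is irreducible by \cite[Proposition~6.1]{GangopadhyaySebastianFundamental}. Flatness alone gives only equidimensionality, not the irreducibility you attribute to it.
\item The locus where $E/K$ is locally cyclic is a nonempty open subset of the fibre on which $\varphi$ is smooth. The same determinant computation at $K=t^m\mathcal{O}e_1\oplus\mathcal{O}^{\oplus(r-1)}$ gives all of $\mathcal{O}/t^m$.
\item Hence the fibre is generically reduced, and $R_0+S_1$ gives reducedness.
\end{enumerate}
With these two repairs, your argument becomes a self-contained substitute for the citation of Ito.
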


\begin{proof}
$(\Rightarrow)$ This follows from \cite[Lemma 3.3]{GangopadhyaySebastianFundamental}.

$(\Leftarrow)$ By \cite[Lemma 6.5]{GangopadhyaySebastianFundamental}, we can assume that $z=d[c]$
for some $c\in C$ and $d\geq 2$. The result now follows from \cite[Theorem 1.1(4)]{ItoPunctualQuot}.
\end{proof}

\begin{remark}
By a computation of the tangent map, one can in fact prove that, for a point
$q\in Q_d(E,C)$ corresponding to a degree $d$ torsion quotient $F$ of $E$,
the rank of the tangent map $d\varphi_q$ is the length of the scheme-theoretic
support of $F$. Thus, $\varphi$ is smooth at $q$ if and only if $
F \cong \mathcal{O}_D$
for a degree $d$ divisor $D$ in $C$. This can be regarded as a converse to
\cite[Lemma 3.3]{GangopadhyaySebastianFundamental}.
\end{remark}
\subsection{Scheme-theoretic support of universal quotient}
Let $E$ be a nonzero vector bundle on a smooth projective curve $C$, $d$ a positive integer, and $\varphi:Q_d(E,C)\to C^{(d)}$ the Hilbert-Chow morphism. Let
\[
\Phi
=
\operatorname{id}\times\varphi
\colon
C\times Q_d(E,C)\longrightarrow C\times C^{(d)}
\]
and
\[
\mathcal{D}=\Phi^{-1}(\Sigma_d(C)),
\]
an effective Cartier divisor in $C\times Q_d(E,C)$. The projection
$\mathcal{D}\to Q_d(E,C)$ is finite flat of degree $d$, as the same is
true for $\Sigma_d(C)\to C^{(d)}$.

Let
\[
\pi\colon C\times Q_d(E,C)\longrightarrow C
\]
be the projection. Thus, we have a surjection $\pi^*E\twoheadrightarrow\mathcal{F},$
where $\mathcal{F}$ is the universal quotient sheaf on
$C\times Q_d(E,C)$.

Though we will need a very special case of the following technical
lemma, we prove it in more generality as this is a useful property to
keep in mind.

\begin{lemma}\label{D-module}

In the above notation, $\mathcal{D}$ is the scheme-theoretic support
of $\mathcal{F}$.
    
\end{lemma}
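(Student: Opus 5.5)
The plan is to compare three ideals on $C\times Q_d(E,C)$: the annihilator $\operatorname{Ann}(\mathcal{F})$, the Fitting ideal $\operatorname{Fitt}_0(\mathcal{F})$, and the ideal $I_{\mathcal{D}}$ of $\mathcal{D}$. First I show $I_{\mathcal{D}}\subseteq\operatorname{Ann}(\mathcal{F})$ using the Fitting ideal. Then I prove the reverse inclusion separately, using the generic behaviour of $\varphi$ and the fact that $\mathcal{D}$ has no embedded points. The reverse inclusion has to be argued separately because in general $\operatorname{Ann}$ is strictly bigger than $\operatorname{Fitt}_0$: for $\mathcal{O}_p^{\oplus 2}$ on $C$ the Fitting ideal is $\mathfrak m_p^2$ while the annihilator is $\mathfrak m_p$.

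\emph{Step 1: $\operatorname{Fitt}_0(\mathcal{F})=I_{\mathcal{D}}$.} Let $\mathcal{K}$ be the universal kernel, so $0\to\mathcal{K}\to\pi^*E\to\mathcal{F}\to 0$. Since $\mathcal{F}$ is flat over $Q_d(E,C)$ and fibrewise the kernel is a locally free sheaf on a smooth curve, $\mathcal{K}$ is locally free of rank $r$. Hence $\operatorname{Fitt}_0(\mathcal{F})$ is the ideal of the zero locus $\mathcal{D}'$ of $\det(\mathcal{K}\to\pi^*E)$. This $\mathcal{D}'$ is a relative effective Cartier divisor over $Q_d(E,C)$, and its fibre over $q$ is $\operatorname{div}(\mathcal{F}_q)$. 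By construction of the Hilbert--Chow morphism (as in \cite{BagnarolFantechiPerroni2020}), $\varphi$ is exactly the classifying map of $\mathcal{D}'$. Therefore $\mathcal{D}'=\Phi^{-1}(\Sigma_d(C))=\mathcal{D}$. Since $\operatorname{Fitt}_0\subseteq\operatorname{Ann}$ always holds, we get $I_{\mathcal{D}}\subseteq\operatorname{Ann}(\mathcal{F})$. Thus $\mathcal{F}$ is an $\mathcal{O}_{\mathcal{D}}$-module and its scheme-theoretic support $Z$ is a closed subscheme of $\mathcal{D}$.

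\emph{Step 2: $\mathcal{F}\cong\mathcal{O}_{\mathcal{D}}$ over $V=\varphi^{-1}(U)$,} where $U$ is the complement of the big diagonal. For $q\in V$, $\varphi(q)$ is reduced, so $\mathcal{F}_q\cong\mathcal{O}_{\mathcal{D}_q}$. Locally over $V$ pick a section of $\mathcal{F}$ generating one fibre; by Nakayama it gives a surjection $\mathcal{O}_{\mathcal{D}}\to\mathcal{F}$ near that fibre. Both sheaves are flat and finite over $V$ of the same degree $d$, so the kernel is flat of degree $0$, hence zero. Consequently $\operatorname{Ann}(\mathcal{F})=I_{\mathcal{D}}$ over $V$.

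\emph{Step 3: extension across the big diagonal, the main point.} Let $f$ be a local section of $\operatorname{Ann}(\mathcal{F})$; I must show $f|_{\mathcal{D}}=0$. Since $\mathcal{D}$ is a Cartier divisor in the smooth variety $C\times Q_d(E,C)$, it is Cohen--Macaulay, so $\mathcal{O}_{\mathcal{D}}$ has no embedded associated points. It therefore suffices to show that $f$ vanishes at the generic points of $\mathcal{D}$. Because $\varphi$ is flat, $\Phi$ is flat, so $\mathcal{D}\to\Sigma_d(C)$ is flat. Now $\Sigma_d(C)\cong C\times C^{(d-1)}$ is irreducible, so every generic point of $\mathcal{D}$ maps to the generic point of $\Sigma_d(C)$. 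That point lies over $U$, so the generic points of $\mathcal{D}$ lie in $C\times V$. There $f\in I_{\mathcal{D}}$ by Step 2. Hence $f|_{\mathcal{D}}$ vanishes at all associated points and is zero, giving $\operatorname{Ann}(\mathcal{F})=I_{\mathcal{D}}$.

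I expect the main care to be needed in Step 1: identifying the determinantal divisor with $\Phi^{-1}(\Sigma_d(C))$ scheme-theoretically, not just set-theoretically. This relies on the Hilbert--Chow morphism being defined via the determinant, i.e.\ the Fitting divisor, of the universal quotient. If one instead uses a different definition of $\varphi$, an extra check that the two constructions agree is needed.
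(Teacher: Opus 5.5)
Your proof is correct, and its skeleton matches the paper's. Both arguments get $I_{\mathcal{D}}\subseteq\operatorname{Ann}(\mathcal{F})$ from $I_{\mathcal{D}}=\operatorname{Fit}_0(\mathcal{F})$; the paper likewise treats this identification as immediate from the construction of $\varphi$ in \cite{BagnarolFantechiPerroni2020}. Both get the reverse inclusion from two facts: $\mathcal{D}$ is Cohen--Macaulay, and, by flatness of $\varphi$, all generic points of $\mathcal{D}$ lie over $U$.

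The difference is in what you carry from $V$ to all of $\mathcal{D}$. The paper uses smoothness of $\varphi|_V$, cited from \cite[Lemma 3.3]{GangopadhyaySebastianFundamental}, to see that $\mathcal{D}_V$ is smooth. It then concludes by Serre's criterion ($R_0+S_1$) that $\mathcal{D}$ is reduced. After that, $\operatorname{Ann}(\mathcal{F})\subseteq I_{\mathcal{D}}$ reduces to the set-theoretic statement that the support of $\mathcal{F}$ contains $\mathcal{D}_V$, hence its closure. You instead prove directly, from flatness of $\mathcal{F}$ and Nakayama, that $\mathcal{F}$ is locally isomorphic to $\mathcal{O}_{\mathcal{D}}$ over $V$. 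You then use only the $S_1$ property of $\mathcal{D}$ to extend the equality $\operatorname{Ann}(\mathcal{F})=I_{\mathcal{D}}$ across the big diagonal.

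Your route is more self-contained: it needs neither the smoothness result nor the fact that $\mathcal{D}$ is reduced. The cost is that your second inclusion depends on the first, since the map $\mathcal{O}_{\mathcal{D}}\to\mathcal{F}$ only makes sense once $\mathcal{F}$ is known to be an $\mathcal{O}_{\mathcal{D}}$-module. The paper's route yields reducedness of $\mathcal{D}$ as a by-product and keeps the two inclusions independent.
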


\begin{proof}
Let $Q=Q_d(E,C),$ $\Sigma=\Sigma_d(C)$, 
$U\subseteq C^{(d)}$ the complement of the big diagonal, and $V=\varphi^{-1}(U)\subseteq Q$. Also let $\mathcal{D}_V=\mathcal{D}\cap(C\times V)$
and $\Sigma_U=\Sigma\cap(C\times U).$

By \cite[Lemma 3.3]{GangopadhyaySebastianFundamental}, $\varphi|_V\colon V\longrightarrow U$
is smooth. Hence $\mathcal{D}_V\xlongrightarrow{\Phi}\Sigma_U$
is smooth.
As $\Sigma_U\to U$ is étale, $\Sigma_U$ is smooth, and hence
$\mathcal{D}_V$ is smooth. As $\Sigma_U$ is dense in $\Sigma$ and $\varphi$ is flat, \cite[0H8F]{stacks-project} shows $\mathcal{D}_V$ is dense in $\mathcal{D}$. Since $C\times Q$ is smooth
and $\mathcal{D}$ is generically reduced, we get $\mathcal{D}$ is reduced.
Thus, $\mathcal{D}$ is the closure of $\mathcal{D}_V$ with reduced
induced structure.

Clearly, the support of $\mathcal{F}$ on $C\times V$ contains
$\mathcal{D}_V$, so the scheme-theoretic support of $\mathcal{F}$
contains $\mathcal{D}$. It remains to show $\mathcal{I}_{\mathcal{D}}\subseteq \operatorname{Ann}\mathcal{F},$
where $\mathcal{I}_{\mathcal{D}}$ is the ideal sheaf of $\mathcal{D}$,
and $\operatorname{Ann}\mathcal{F}$ is the annihilator ideal sheaf of
$\mathcal{F}$. This will follow if we show that
$\mathcal{I}_{\mathcal{D}}$ is the $0$-th Fitting ideal
$\operatorname{Fit}_0(\mathcal{F})$, by
\cite[Tag 07ZA]{stacks-project}.
But this is immediate from the description of $\varphi$ as in
\cite[Section 2.2]{BagnarolFantechiPerroni2020}.
\end{proof}

\subsection{Atiyah bundle} Given a vector bundle $E$ on a smooth variety $X$, we have its
Atiyah bundle $\operatorname{At}(E)$ fitting into the short exact
sequence
\[
0\longrightarrow \operatorname{\mathcal{E}nd}(E)
\longrightarrow \operatorname{At}(E)
\longrightarrow T_X
\longrightarrow 0,
\]
see \cite{atiyah1957complex}. Letting $\operatorname{at}(E)
:=
\operatorname{At}(E)/\mathcal{O}_X,$
we get a short exact sequence
\begin{equation}\label{atiyah}
0\longrightarrow \operatorname{ad}(E)
\longrightarrow \operatorname{at}(E)
\longrightarrow T_X
\longrightarrow 0.
\end{equation}
See \cite[Section 3]{BiswasGangopadhyaySebastian} for more details.
\subsection{A non-natural involution of $Q_2(\mathcal{O}_C^{\oplus r},C)$}
 Let $C$ be a smooth projective curve, $r \geq 2$ an integer,
$Q_2 := Q_2(\mathcal{O}_C^{\oplus r},C)$ and
$\varphi \colon Q_2 \longrightarrow C^{(2)}$ the Hilbert--Chow morphism. We will construct an involution $\alpha=
\alpha_{C,r}$ of $Q_2$
over $C^{(2)}$.

Let
\[
\Phi := \operatorname{id} \times \varphi \colon C \times Q_2
\longrightarrow C \times C^{(2)}
\]
and
\[
\mathcal{D}:=\Phi^{-1}(\Sigma_2(C)).
\]
Since $\Sigma_2(C)\cong C^2$, swapping the two factors gives an involution
$\tau$ of $\Sigma_2(C)$. Its pullback $\widetilde{\tau}$ is an involution
of $\mathcal{D}$.

Let $$\widetilde{\eta}:\mathcal{O}_{C\times Q_2}^{\oplus r}
\twoheadrightarrow \mathcal{F}$$
be the universal surjection. As $\mathcal{F}$ is an
$\mathcal{O}_{\mathcal{D}}$-module by Lemma \ref{D-module}, $\widetilde{\eta}$ is induced
by a surjection of sheaves  over $\mathcal{D}$:
$$\eta:\mathcal{O}_{\mathcal{D}}^{\oplus r}\twoheadrightarrow\mathcal{F}.$$
Pulling back by $\widetilde{\tau}$, we obtain a surjection
\[
\eta':\mathcal{O}_{\mathcal{D}}^{\oplus r}
\twoheadrightarrow \widetilde{\tau}^{*}\mathcal{F},
\]
which induces a surjection
\[
\widetilde{\eta'}:\mathcal{O}_{C\times Q_2}^{\oplus r}
\twoheadrightarrow \widetilde{\tau}^{*}\mathcal{F},
\] where $\widetilde{\tau}^{*}\mathcal{F}$ is regarded as a sheaf on $C\times Q_2$. As $\mathcal{F}$ is flat over $Q_2$, we get $\widetilde{\tau}^{*}\mathcal{F}$ is also flat over $Q_2$.
By the universal property of the Quot scheme, $\widetilde{\eta'}$ gives a
morphism
\[
\alpha\colon Q_2\longrightarrow Q_2.
\]
By a local computation or otherwise, one can easily show that
$\alpha$ is an involution.

Over a point $[D]=[x_1+x_2]\in C^{(2)}$ with $x_1\neq x_2$, the fibre of $\varphi$ is $(\mathbb{P}^{r-1})^2$, and $\alpha$ swaps the two factors. From this, one can easily prove the following.

\begin{lemma}\label{commute}
  In the above notation,  $\alpha$ is non-natural and commutes with all natural automorphisms of $Q_2.$
\end{lemma}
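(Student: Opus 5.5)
We prove the two assertions separately, working over the open set $U\subseteq C^{(2)}$, the complement of the diagonal, and then extending by continuity. By construction $\alpha$ lies over $C^{(2)}$, so $\varphi\circ\alpha=\varphi$. Let $V=\varphi^{-1}(U)$; by \cite[Lemma 3.3]{GangopadhyaySebastianFundamental} it is dense in $Q_2$. Since $Q_2$ is a variety, hence separated and reduced, two morphisms $Q_2\to Q_2$ agreeing on $V$ agree everywhere. So it suffices to compare automorphisms on closed points of $V$.

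Fix $[x_1+x_2]\in U$. A quotient in the fibre is $\mathcal{O}^{\oplus r}\twoheadrightarrow k(x_1)\oplus k(x_2)$, that is, a pair of lines $(\ell_1,\ell_2)\in\mathbb{P}^{r-1}\times\mathbb{P}^{r-1}$ of quotients of $\mathbb{C}^r$. Here $\mathbb{C}^r$ is identified with both fibres $E(x_1)$ and $E(x_2)$ via the trivialization. Over $V$, $\mathcal{D}$ is étale over $V$ with two sheets, and $\widetilde\tau$ swaps the sheets. Pulling $\eta$ back along $\widetilde\tau$ therefore gives the quotient $(\ell_2,\ell_1)$: the quotient previously at $x_2$ is now placed at $x_1$, and conversely. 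This uses that the trivialization $\mathcal{O}_{\mathcal{D}}^{\oplus r}$ is compatible with $\widetilde\tau$, which holds because $\widetilde\tau^*\mathcal{O}_{\mathcal{D}}^{\oplus r}=\mathcal{O}_{\mathcal{D}}^{\oplus r}$ canonically. This confirms the description stated in the paper.

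\textbf{Commutation.} A natural automorphism $\psi$ comes from $h\in\operatorname{Aut}(C)$, a line bundle $L$, and an isomorphism $\mathcal{O}^{\oplus r}\cong h^*\mathcal{O}^{\oplus r}\otimes L$. Since $\mathcal{O}^{\oplus r}\otimes L$ is trivial only when $L$ is trivial, we have $L\cong\mathcal{O}$. The isomorphism is then given by a matrix $A\in\operatorname{GL}_r(H^0(\mathcal{O}_C))=\operatorname{GL}_r(\mathbb{C})$; it is constant because $C$ is projective. Hence $\psi$ sends $(x_1+x_2;\ell_1,\ell_2)$ to $(h(x_1)+h(x_2);A\ell_1,A\ell_2)$, with lines placed at the corresponding points. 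Both $\psi\alpha$ and $\alpha\psi$ send this point to the quotient with $A\ell_2$ at $h(x_1)$ and $A\ell_1$ at $h(x_2)$. So they agree on $V$, hence everywhere.

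\textbf{Non-naturality.} Suppose $\alpha$ were natural, given by $(h,A)$ as above. Since $\varphi\circ\alpha=\varphi$, the automorphism $h^{(2)}$ of $C^{(2)}$ is the identity. This forces $h=\mathrm{id}$: if $h(x)\neq x$ for some $x$, then $\{h(x),h(y)\}=\{x,y\}$ for all $y$ forces $h(y)=x$ for all $y\neq x$, which is absurd. Thus $\alpha(x_1+x_2;\ell_1,\ell_2)=(x_1+x_2;A\ell_1,A\ell_2)$. On the other hand $\alpha$ gives $(\ell_2,\ell_1)$. Take $\ell_1=\ell_2=\ell$; then $A\ell=\ell$ for every $\ell$, so $A$ is scalar. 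Now take $\ell_1\neq\ell_2$, which is possible since $r\geq 2$. Then $(\ell_1,\ell_2)\neq(\ell_2,\ell_1)$, a contradiction.

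The only delicate point is the fibrewise identification of $\alpha$ as the swap, specifically its compatibility with the trivialization. This is handled by the étale local splitting $\mathcal{D}_V\cong V\sqcup V$ locally, which follows from Lemma \ref{D-module} and the étaleness of $\Sigma_U\to U$.
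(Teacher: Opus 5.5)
Your proof is correct and takes the same route as the paper. The paper only notes that over $[x_1+x_2]$ with $x_1\neq x_2$ the involution $\alpha$ swaps the two factors of $(\mathbb{P}^{r-1})^2$, and calls the lemma an easy consequence. You supply the omitted details: natural automorphisms of $Q_2(\mathcal{O}_C^{\oplus r},C)$ are given by some $h\in\operatorname{Aut}(C)$ and a constant $A\in\operatorname{GL}_r(\mathbb{C})$; you compare $\alpha$ with these on the fibres over $U$; and you extend the equality from the dense open $V$ to all of $Q_2$.
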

If $E\cong \mathcal{O}_C^{\oplus r}$ up to a line bundle twist, we will regard $\alpha_{C,r}$ as an involution of $Q_2(E,C)$, via the natural identification $Q_2(E,C)=Q_2(\mathcal{O}_C^{\oplus r},C).$
\section{Isomorphisms of symmetric powers}
We prove Theorem \ref{A} in this section.

\textit{Proof of Theorem \ref{A}:}
Since $h^1\bigl(C_i^{(d)},\mathcal O_{C_i^{(d)}}\bigr)=g(C_i),$
we have \(g(C_1)=g(C_2)\), call it \(g\). We consider the cases separately.

\medskip

\noindent\underline{\textbf{Case 1: \(g=0\)}}

In this case, we can assume \(C_1=C_2=\mathbb P^1\). Thus $C_1^{(d)}=C_2^{(d)}=\mathbb P^d,$
and the big diagonal is the projective dual of the degree \(d\) rational normal curve \(X\) in the dual projective space \((\mathbb P^d)^\vee\). Now \(\varphi\) induces an automorphism \(\varphi^\vee\) of \((\mathbb P^d)^\vee\) preserving \(X\). This \(\varphi^\vee\) must be induced by an automorphism of \(\mathbb P^1\), via its \(d\)-uple embedding into \((\mathbb P^d)^\vee\). This shows that \(\varphi\) is natural.

\medskip

\noindent\underline{\textbf{Case 2: \(g=1\)}}

In this case, the Albanese variety of \(C_i^{(d)}\) is the elliptic curve \(C_i\). So we can assume \(C_1\) and \(C_2\) are the same elliptic curve. Now the result follows from \cite[Proposition 1.5]{CataneseCiliberto1993}.

\medskip

\noindent\underline{\textbf{Case 3: \(g\geq 2\)}}

Let \(U_i\subseteq C_i^{(d)}\) be the complement of the big diagonal, and let $V_i\subseteq C_i^d$
be the inverse image of \(U_i\) under the quotient map $q_i\colon C_i^d\longrightarrow C_i^{(d)}.$
Since \(\varphi\) preserves the big diagonal, it restricts to an isomorphism $\varphi_0:U_1\longrightarrow U_2.$
The map $V_i\xrightarrow{q_i}U_i$
is finite étale, so we have a natural inclusion $\pi_1(V_i)\subset \pi_1(U_i).$
This is the same as the inclusion of the pure braid group in the braid group: $P_d(C_i)\subset B_d(C_i).$
By \cite[Theorem 1.5]{An2016}, \(P_d(C_i)\) is a characteristic subgroup of \(B_d(C_i)\). This shows that the isomorphism $\varphi_{0*} \colon \pi_1(U_1) \longrightarrow \pi_1(U_2)$
maps \(\pi_1(V_1)\) onto \(\pi_1(V_2)\). So, \(\varphi_0\) lifts to a morphism $\psi_0 \colon V_1 \longrightarrow V_2.$
The same argument lifts \(\varphi_0^{-1}\), hence \(\psi_0\) is an isomorphism. Since \(V_i\) is the ordered configuration space of \(d\) points in \(C_i\), an application of \cite[Theorem 6.1]{ChenSalter}, shows that \(\varphi\) is natural.
\section{Isomorphisms of Quot schemes}
The goal of this section is to prove Theorem \ref{B}, Supplement \ref{suppB} and Corollary \ref{aut}. First we prove a special case of Theorem \ref{B}, where $C_1=C_2=C$ and $\sigma$ is over $C^{(d)}$.
\begin{theorem}\label{B1}
    Let \(C\) be a smooth projective curve, \(E_1\) and $E_2$ vector bundles on \(C\) of rank \(r\geq 2\), and \(d\geq 2\) an integer. Suppose $\sigma\colon Q_d(E_1,C)\longrightarrow Q_d(E_2,C)$
is an isomorphism over $C^{(d)}$. Then one of the following holds:
\begin{enumerate}
     \item \(\sigma\) is natural,
    
    \item \(d=2\), each \(E_i\) is trivial up to line bundle twist, and $\sigma\circ \alpha_{C,r}$
    is natural.
\end{enumerate}
\end{theorem}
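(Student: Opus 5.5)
Since $\sigma$ is an isomorphism over $C^{(d)}$, I will first work over the open set $U\subseteq C^{(d)}$ that is the complement of the big diagonal, and then extend using codimension and normality.

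\textbf{Step 1: reduce to a statement on $\Sigma_U$.} Over a point $[x_1+\dots+x_d]\in U$ the fibre of $Q_d(E_i,C)\to C^{(d)}$ is $\prod_j \mathbb{P}(E_i(x_j))$. I would use Lemma \ref{D-module}, restricted to $V_i=\varphi_i^{-1}(U)$, to identify $V_i$ over $U$ with the Weil restriction (norm) along the finite étale cover $\Sigma_U\to U$ of the projective bundle $\Sigma_U\times_C\mathbb{P}_C(E_i)$. More precisely, $V_i$ should be the scheme of sections over $\Sigma_U/U$ of $q^*\mathbb{P}_C(E_i)$.

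\textbf{Step 2: read $\sigma|_{V_1}$ as an isomorphism of Lie algebras.} Push the relative tangent bundles down to $U$. This gives
\[
(\varphi_i)_*T_{\varphi_i}\big|_U\cong p_*\operatorname{ad}(q^*E_i)\big|_U .
\]
The fibre of the right-hand side is $\bigoplus_j \operatorname{ad}(E_i(x_j))$, a semisimple Lie algebra. The isomorphism $\sigma$ induces an isomorphism of these sheaves of Lie algebras over $U$. Its centroid is $p_*\mathcal{O}_{\Sigma_U}$, because the $\operatorname{ad}$ factors are simple and perfect. Therefore $\sigma$ induces an $\mathcal{O}_U$-algebra automorphism of $p_*\mathcal{O}_{\Sigma_U}$, that is, an automorphism $\tau$ of $\Sigma_U$ over $U$.

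\textbf{Step 3: the case split.} By Lemma \ref{noauto}, $\tau$ is the identity when $d\ge3$. When $d=2$, $\tau$ is either the identity or the swap.

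If $\tau=\mathrm{id}$, then $\sigma$ gives a $p_*\mathcal{O}$-linear Lie isomorphism. Equivalently, it gives an isomorphism of sheaves of Lie algebras $\operatorname{ad}(q^*E_1)\to\operatorname{ad}(q^*E_2)$ over $\Sigma_U$. Moreover $\sigma$ acts fibrewise on $\prod_j\mathbb{P}(E_i(x_j))$ by automorphisms preserving the factors, so this isomorphism is fibrewise inner. By the injectivity of \eqref{injective}, it comes from a unique isomorphism of projective bundles
\[
\Sigma_U\times_C\mathbb{P}(E_1)\to\Sigma_U\times_C\mathbb{P}(E_2)
\]
over $\Sigma_U$.

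Now use $\Sigma_d\cong C\times C^{(d-1)}$. The set $\Sigma_U$ is open in $C\times C^{(d-1)}$ with complement of codimension $\ge1$. However, its fibre over each $x\in C$ is open dense in $C^{(d-1)}$, and $C^{(d-1)}$ is projective. An isomorphism $\mathbb{P}(E_1(x))\to\mathbb{P}(E_2(x))$ that varies over that open set is a map to the affine variety $\operatorname{Iso}(\mathbb{P}(E_1(x)),\mathbb{P}(E_2(x)))\subset\mathbb{P}(\operatorname{Hom})$. Extending across the codimension-one diagonal pieces and using properness of $C^{(d-1)}$, it should be constant in the $C^{(d-1)}$ direction.

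The main obstacle is precisely this extension. I would handle it via the projective-bundle picture: a rational map from $C^{(d-1)}$ to $\operatorname{PGL}_r$ that is regular off the diagonals must be constant. This can be obtained by noting that $\sigma$ is a morphism on all of $Q_d$, so no blow-up behaviour is allowed near the diagonal. The output is an isomorphism $\mathbb{P}(E_1)\to\mathbb{P}(E_2)$ over $C$, hence a natural isomorphism $\nu$ agreeing with $\sigma$ on $V_1$. Since $V_1$ is dense, $\sigma=\nu$.

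If $d=2$ and $\tau$ is the swap, the same argument produces an isomorphism $\mathbb{P}(E_1(x_1))\to\mathbb{P}(E_2(x_2))$ depending on the pair $(x_1,x_2)\in C^2$ off the diagonal. Constancy in each variable separately forces the map to factor as $A(x_1,x_2)=\beta(x_2)^{-1}\gamma(x_1)$. Here $\gamma$ is a trivialization of $\mathbb{P}(E_1)$ and $\beta$ is a trivialization of $\mathbb{P}(E_2)$, both to a fixed $\mathbb{P}^{r-1}$. Hence each $\mathbb{P}(E_i)$ is trivial, so $E_i$ is trivial up to a line bundle twist. Then $\sigma\circ\alpha_{C,r}$ has $\tau=\mathrm{id}$, and by the previous case it is natural.
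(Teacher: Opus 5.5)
There is a genuine gap, and it is exactly the step you call the main obstacle. Steps 1--2 only take place over $U$. So what you actually get is a family $h(x,D)$ of isomorphisms $\mathbb{P}(E_1(x))\to\mathbb{P}(E_2(x))$ parametrized by $\Sigma_U$, and you must show it does not depend on $D$.

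The principle you invoke for this is false. For fixed $x$, $D-x$ ranges over the complement in $C^{(d-1)}$ of the ample divisor $x+C^{(d-2)}$ together with the big diagonal of $C^{(d-1)}$. That complement is affine. Composing a non-constant regular function on it with a unipotent one-parameter subgroup $\mathbb{G}_a\subset\operatorname{PGL}_r$ gives a non-constant map $C^{(d-1)}\dashrightarrow\operatorname{PGL}_r$ that is regular off the diagonals.

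So constancy has to come from extending the family across the codimension-one locus $\Sigma\setminus\Sigma_U$. No Hartogs argument is available there, and the sentence ``$\sigma$ is a morphism, so no blow-up behaviour is allowed'' does not supply the extension.

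Case 2 has the same hole. For fixed $x_2$, the maps $A(x_1,x_2)$ only trivialize $\mathbb{P}(E_1)$ over $C\setminus\{x_2\}$, and this carries no information. Indeed $E_1|_{C\setminus\{x_2\}}\cong\mathcal{O}^{\oplus(r-1)}\oplus\det E_1$, and $\operatorname{Pic}(C\setminus\{x_2\})\cong\operatorname{Pic}^0(C)$ is divisible. Hence every projective bundle $\mathbb{P}(E)$ over $C\setminus\{x_2\}$ is trivial. You need $A(x_2,x_2)$ as well.

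The missing ingredient is a global version of your Step 2. The paper does not restrict to $U$ at this stage.

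\begin{itemize}
\item Claim \ref{Ki} rests on the Biswas--Gangopadhyay--Sebastian isomorphism $(\varphi_i)_*T_{Q_i}\cong(\operatorname{at}E_i)^{[d]}$. It identifies $K_i=(\varphi_i)_*T_{\varphi_i}$ with $(\operatorname{ad}E_i)^{[d]}$ as sheaves of Lie algebras on all of $C^{(d)}$.
\item Claim \ref{centroid} gives centroid $p_*\mathcal{O}_\Sigma$, again on all of $C^{(d)}$.
\end{itemize}

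Hence $\sigma$ yields $\theta$ over the whole of $C^{(d)}$. When $\beta=\mathrm{id}$, this gives an isomorphism $q^*\operatorname{ad}E_1\to q^*\operatorname{ad}E_2$ on all of $\Sigma\cong C\times C^{(d-1)}$. Since $q$ is a contraction, the projection formula gives $\operatorname{Hom}(q^*\operatorname{ad}E_1,q^*\operatorname{ad}E_2)=\operatorname{Hom}(\operatorname{ad}E_1,\operatorname{ad}E_2)$. So the isomorphism is $q^*\eta$ for some $\eta$ on $C$, and injectivity of \eqref{injective} then forces $h(x,D)$ to be independent of $D$.

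In Case 2 the same global statement provides $\psi$ on all of $C^2$, diagonal included. Closedness of the inner automorphisms then makes $\psi(x,x)$ inner, which is what lets $\tau(\cdot,y)$ be glued over all of $C$.

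Without this global identification of $K_i$, or some other genuine control of $\sigma$ near the big diagonal, your argument does not close.
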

\begin{proof}
For ease of notation, let $Q_i:=Q_d(E_i,C)$ and
$\varphi_i:Q_i\longrightarrow C^{(d)}$
the Hilbert--Chow morphisms, and $T_i:=T_{Q_i}$
for $i=1,2$. Let \(U\subset C^{(d)}\) be the complement of the big diagonal. For
\([D]\in U\), let \(\sigma_D\) denote the restriction of \(\sigma\) over
\([D]\). There is a natural identification
\[
\varphi_i^{-1}([D])=\prod_{x\leq D}\mathbb P(E_i(x))
\] for \([D]\in U\), so $\sigma_D$ can be regarded as an isomorphism of these products of projective spaces.
Let $V_i:=\varphi_i^{-1}(U),$ and $f_i:V_i\longrightarrow U$ the restriction of $\varphi_i.$ So $f_i$ is smooth. Also, let $\Sigma=\Sigma_d(C)$ be the universal divisor in
$C\times C^{(d)}$, with projections
\[
\Sigma \xrightarrow{q} C,
\qquad
\Sigma \xrightarrow{p} C^{(d)}.
\]
Since $\varphi_i$ is a contraction,
pushforward of the natural map $T_i\xrightarrow{d\varphi_i}\varphi_i^*T_{C^{(d)}}$
gives a map $$
(\varphi_i)_*T_i
\xrightarrow{\psi_i}
T_{C^{(d)}}.$$
Let $K_i:=\ker(\psi_i).$ So $K_i=(\varphi_{i})_*T_{\varphi_i}$.

When regarded as modules over the constant sheaf $\underline{\mathbb{C}}$ on $C^{(d)}$, both
$(\varphi_i)_*T_i$ and $T_{C^{(d)}}$ are sheaves of Lie algebras under
the Lie bracket of vector fields. A local calculation shows that the
Lie bracket on $K_i$ is $\mathcal{O}_{C^{(d)}}$-bilinear, so $K_i$ is a sheaf of Lie algebras when regarded as an
$\mathcal{O}_{C^{(d)}}$-module. Also, it is easy to see that
$(\operatorname{ad} E_i)^{[d]}$ is a sheaf of Lie
algebras on $C^{(d)}$.

For \([D]\in U\), we have natural direct sum
decompositions:
\begin{equation}\label{dsum 1}
(\operatorname{ad}E_i)^{[d]}([D])
\cong
\bigoplus_{x\leq D}
q^*(\operatorname{ad}E_i)(x,D-x)
=
\bigoplus_{x\leq D}(\operatorname{ad} E_i)(x),
\end{equation}
and
\begin{equation}\label{dsum 2}
K_i([D])
\cong
\bigoplus_{x\leq D}
H^0\!\left(\mathbb P(E_i(x)),
T_{\mathbb P(E_i(x))}\right).
\end{equation}

\begin{claim}\label{Ki}
$K_i \cong (\operatorname{ad} E_i)^{[d]}$
of sheaves of Lie algebras on $C^{(d)}$.
\end{claim}
\begin{proof}
By \cite[Theorem 1.1]{BiswasGangopadhyaySebastian}, we have a natural isomorphism
\[
h:(\varphi_{i})_*T_i \longrightarrow (\operatorname{at}E_i)^{[d]}.
\]
So, by \eqref{atiyah}, there is a natural map $\varphi_{i*}T_i \xrightarrow{t_i}  T_C^{[d]}$
with kernel mapped onto \((\operatorname{ad}E_i)^{[d]}\) via \(h\).

First we show \(\ker t_i=K_i\). This will follow if we have an
injection $T_C^{[d]}
\xrightarrow{\delta_i}
T_{C^{(d)}}$
such that $\delta_i\circ t_i=\psi_i.$
We now define $\delta_i$. Let $D$ be an effective divisor on $C$ of degree $d$ with ideal sheaf $\mathcal{I}_D$. We have
\[
T_{[D]}C^{(d)}= \operatorname{Hom}_D(\mathcal{I}_D|_D,\mathcal{O}_D).
\]
Dualizing the exact sequence
\[
\mathcal{I}_D/\mathcal{I}_D^2
\longrightarrow \Omega_C|_D
\longrightarrow \Omega_D
\longrightarrow 0
\]
and taking global sections, we get an exact sequence
\[
0\longrightarrow
\operatorname{Hom}_D(\Omega_D,\mathcal{O}_D)
\longrightarrow
H^0(D,T_C|_D)
\longrightarrow
T_{[D]}C^{(d)}.
\]
Since $p$ is finite flat, we also have
\[
H^0(D,T_C|_D)
=
(p_*q^*T_C)([D])
=
T_C^{[d]}([D]).
\]
Thus we have defined a map
\[
T_C^{[d]}([D])
\longrightarrow
T_{C^{(d)}}([D]).
\]
This construction can be carried out in families, giving a morphism
\[
\delta_i:T_C^{[d]}\longrightarrow T_{C^{(d)}}.
\]
The map $\delta_i$ is injective, since it is generically injective: $
\operatorname{Hom}_D(\Omega_D,\mathcal{O}_D)=0$
when $D$ is reduced. One can check that $\delta_i\circ t_i=\psi_i.$ 

Now it remains to show the induced isomorphism $\overline{h}:K_i \longrightarrow (\operatorname{ad}E_i)^{[d]}$
preserves Lie brackets. This can be checked on each fibre over \([D]\in U\), which follows as \(\overline{h}([D])\) is induced from the Lie algebra isomorphisms
\[
\operatorname{ad}E_i(x)\xrightarrow{\sim}
H^0\bigl(\mathbb{P}(E_i(x)),T_{\mathbb{P}(E_i(x))}\bigr)
\]
via \eqref{dsum 1} and \eqref{dsum 2}.
\end{proof}

Note that $\sigma$ induces an isomorphism of sheaves of Lie algebras $K_1\rightarrow K_2.$
Hence, by Claim \ref{Ki}, it induces an isomorphism of sheaves of Lie algebras
\[
\theta:
(\operatorname{ad}E_1)^{[d]}
\rightarrow
(\operatorname{ad}E_2)^{[d]}.
\]

\begin{claim}\label{centroid}
$\operatorname{Cent}\bigl((\operatorname{ad}E_i)^{[d]}\bigr)
\cong p_*\mathcal{O}_{\Sigma}.$
\end{claim}

\begin{proof}
As $q^*(\operatorname{ad}E_i)$ is a sheaf of Lie algebras on $\Sigma$, the
Lie bracket of $(\operatorname{ad}E_i)^{[d]}
=
p_*q^*(\operatorname{ad}E_i)$
is $p_*\mathcal{O}_{\Sigma}$-bilinear. Thus $p_*\mathcal{O}_{\Sigma}
\subseteq
\operatorname{Cent}\bigl((\operatorname{ad}E_i)^{[d]}\bigr).$

As $\operatorname{ad}(\mathbb{C}^r)\cong \mathfrak{sl}(\mathbb{C}^r)$ is perfect for $r\geq 2$, we get $\operatorname{ad}E_i$ is perfect. Hence $q^*(\operatorname{ad}E_i)$ is perfect. As $p$ is an affine map, we get $
(\operatorname{ad}E_i)^{[d]}
=
p_*q^*(\operatorname{ad}E_i)$
is perfect. Therefore $\operatorname{Cent}\bigl((\operatorname{ad}E_i)^{[d]}\bigr)$
is commutative. This shows
\[
\operatorname{Cent}\bigl((\operatorname{ad}E_i)^{[d]}\bigr)
=
\operatorname{Cent}_{p_*\mathcal{O}_{\Sigma}}
\bigl(p_*q^*(\operatorname{ad}E_i)\bigr)
=
p_*\operatorname{Cent}\bigl(q^*(\operatorname{ad}E_i)\bigr),
\] where in the last equality we used the equivalence of categories of
quasi-coherent $\mathcal{O}_{\Sigma}$-modules and quasi-coherent
$p_*\mathcal{O}_{\Sigma}$-modules; see \cite[Exercise II.5.17]{hartshorne2013algebraic}.

Thus, it suffices to show $\operatorname{Cent}\bigl(q^*\operatorname{ad}E_i\bigr)
=\mathcal{O}_{\Sigma}.$
This can be proved locally, by trivializing $E_i$ and using the fact that $\operatorname{Cent}_{\mathbb{C}}(\operatorname{ad}(\mathbb{C}^r))=\mathbb{C},$
which follows from Schur's lemma, since $\operatorname{ad}(\mathbb{C}^r)\cong \mathfrak{sl}(\mathbb{C}^r)$
is a simple Lie algebra.
\end{proof}

By Claim \ref{centroid}, $\theta$ induces an isomorphism of
$\mathcal{O}_{C^{(d)}}$-algebras $\theta_0:p_*\mathcal{O}_{\Sigma}
\xrightarrow{\sim}
p_*\mathcal{O}_{\Sigma}.$
This corresponds to an isomorphism
\[
\beta:\Sigma\xrightarrow{\sim}\Sigma
\]
over $C^{(d)}$.
By Lemma \ref{noauto}, either $\beta=\operatorname{id}$,
or $d=2$. We consider these cases separately.

\medskip

\noindent\underline{\textbf{Case 1: $\beta=\operatorname{id}$.}}

Then $\theta_0=\operatorname{id}$, so $\theta$ is
$p_*\mathcal{O}_{\Sigma}$-linear. Hence $\theta$ is induced by an
isomorphism
\[
q^*\operatorname{ad}E_1
\xrightarrow{\sim}
q^*\operatorname{ad}E_2
\]
of sheaves of Lie algebras on $\Sigma$, which is in turn induced by an
isomorphism
\[
\eta:\operatorname{ad}E_1
\xrightarrow{\sim}
\operatorname{ad}E_2
\]
of sheaves of Lie algebras on $C$. Here we are using the fact that $q$ is a contraction.

\begin{claim}\label{claim 3}
There are isomorphisms
$$
h(x,D):\mathbb P(E_1(x))\longrightarrow\mathbb P(E_2(x))$$
for  $[D]\in U$ and $x\leq D,$
such that the following hold:
\begin{enumerate}
    \item[(i)]
$\sigma_D=\prod_{x\leq D}h(x,D),$
\item[(ii)] $\operatorname{ad}_{h(x,D)}=\eta(x).$
\end{enumerate}
\end{claim}

\begin{proof}

By Claim \ref{Ki}, we have an isomorphism of vector bundles $f_{i*}T_{f_i}\cong
(\operatorname{ad}E_i)^{[d]}|_U.$

Since \(\theta\) is induced by \(\eta\), the isomorphism
\[
\theta([D]):
(\operatorname{ad}E_1)^{[d]}([D])
\longrightarrow
(\operatorname{ad}E_2)^{[d]}([D])
\]
corresponds to $\bigoplus_{x\leq D}\eta(x),$ via \eqref{dsum 1}.
In particular, \(\theta([D])\) respects the direct sum decomposition in \eqref{dsum 1}. So the isomorphism $K_1([D])\longrightarrow K_2([D])$
induced by \(\sigma_D\) respects the direct sum decomposition in \eqref{dsum 2}.
Hence \(\sigma_D\) preserves the product structure. In other words, there
are isomorphisms $h(x,D)$
such that $(i)$ holds.

Also, via \eqref{dsum 2}, the isomorphism $K_1([D])\longrightarrow K_2([D])$
induced by \(\sigma_D\) corresponds to $\bigoplus_{x\leq D}\operatorname{ad}_{h(x,D)}.$
We already noted that via \eqref{dsum 1},
\(\theta([D])\) corresponds to $\bigoplus_{x\leq D}\eta(x).$
This proves $(ii)$.
\end{proof}
Now the injectivity of \eqref{injective}
implies that \(h(x,D)\) is independent of \(D\), so there are unique isomorphisms
\[
h(x):\mathbb P(E_1(x))\longrightarrow\mathbb P(E_2(x))
\] for all $x\in C$
satisfying $\operatorname{ad}_{h(x)}=\eta(x).$ These glue together to give an
isomorphism of projective bundles $h:\mathbb P_C(E_1)\xrightarrow{\sim}\mathbb P_C(E_2)$
over \(C\). The natural isomorphism $Q_d(E_1,C)\xrightarrow{\sim}Q_d(E_2,C)$
induced by \(h\) agrees with \(\sigma\) over every
\([D]\in U\), by Claim \ref{claim 3} $(i)$, so agrees everywhere.

\medskip

\noindent
\underline{\textbf{Case 2}: \(d=2\) and \(\beta\neq\operatorname{id}\).}

So $\Sigma\cong C^2$,
and \(\beta\) swaps the two factors. Let $\Sigma\xlongrightarrow{q_i} C$
be the projections onto two factors, with $q=q_1$. Note that \(\theta\) induces a \(p_*\mathcal O_\Sigma\)-linear
isomorphism of sheaves of Lie algebras
\[
p_*q^*\operatorname{ad}(E_1)
   \xrightarrow{\sim}
p_*\beta^*q^*\operatorname{ad}(E_2).
\]
So, we obtain an isomorphism of sheaves of Lie algebras on
\(\Sigma\)
\[
\psi:q_1^*\operatorname{ad}(E_1)
   \xrightarrow{\sim}
q_2^*\operatorname{ad}(E_2).
\]
For \(x,y\in C\), restricting $\psi$ to the fibre over $(x,y)\in C^2\cong\Sigma$
gives an isomorphism of Lie algebras
\[
\psi(x,y):
\operatorname{ad}(E_1(x))
\xrightarrow{\sim}
\operatorname{ad}(E_2(y)).
\]
Similarly as in Claim \ref{claim 3}, one can prove the following claim. Here for \(x\neq y\) in \(C\), the restriction of \(\sigma\) over \([x+y]\in C^{(2)}\) is denoted by \(\sigma_{x+y}\), and \(\varphi_i^{-1}([x+y])\) is identified with \(\mathbb{P}(E_i(x))\times\mathbb{P}(E_i(y))\).
\begin{claim}\label{claim 4}
For each \(x\neq y\) in \(C\), there is an isomorphism
$$\tau(x,y):\mathbb{P}(E_1(x))\to\mathbb{P}(E_2(y))$$
such that

(i) \(\sigma_{x+y}\) is \(\tau(x,y)\times\tau(y,x)\), composed by the swap of the two factors.

(ii) \(\operatorname{ad}_{\tau(x,y)}=\psi(x,y)\).
\end{claim}

In particular, \(\psi(x,y)\) is an inner isomorphism for all \(x\neq y\). As the subgroup of inner automorphisms is closed in the group of all Lie algebra automorphisms of $\operatorname{ad}(\mathbb{C}^r)$, locally trivializing \(E_i\)'s and by a continuity argument we see that \(\psi(x,y)\) is inner for all \((x,y)\in C^2\). Thus one can uniquely define isomorphisms
\(\tau(x,y):\mathbb{P}(E_1(x))\to\mathbb{P}(E_2(y))\)
with $\operatorname{ad}_{\tau_{(x,y)}}=\psi(x,y)$
for all \((x,y)\in C^2\).

Now fix \(y\in C\). The isomorphisms \(\tau(x,y)\) for \(x\in C\) glue to give an isomorphism
\[
\mathbb{P}(E_1)\cong \mathbb{P}(E_2(y))\times C
\] over $C.$
Hence \(E_1\) is trivial up to a line bundle twist. Similarly \(E_2\) is trivial up to a line bundle twist.

Now, replacing \(\sigma\) by \(\sigma\circ\alpha_{C,r}\), we are back to Case 1. This completes the proof.

\end{proof}

\textit{Proof of Theorem \ref{B}:}
Let $\varphi_i\colon Q_d(E_i,C_i)\longrightarrow C_i^{(d)}$
be the Hilbert--Chow morphisms. We first prove the following.

\begin{claim}\label{descend}
\(\sigma\) descends to an isomorphism $\overline{\sigma}\colon C_1^{(d)}\longrightarrow C_2^{(d)}.$
\end{claim}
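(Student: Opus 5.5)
The plan is to show that $\varphi_2\circ\sigma$ is constant on the fibres of $\varphi_1$. Since $\varphi_1$ is a contraction, this lets $\sigma$ descend to a morphism $\overline{\sigma}$. Running the same argument for $\sigma^{-1}$ then shows that $\overline{\sigma}$ is an isomorphism.

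\emph{Step 1: rigidity.} By the rigidity lemma for contractions, it suffices to find one point $z\in C_1^{(d)}$ such that $\varphi_2\circ\sigma$ contracts the fibre $\varphi_1^{-1}(z)$ to a point. Since $\varphi_1$ is flat with connected fibres, rigidity then propagates this to every fibre.

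\emph{Step 2: a general fibre.} Take $z\in U_1$ general. Then $\varphi_1^{-1}(z)\cong(\mathbb P^{r-1})^d$. Consider its image under $\varphi_2\circ\sigma$ in $C_2^{(d)}$.
\begin{itemize}
\item If $g(C_2)\ge 1$, then $C_2^{(d)}$ maps to $\operatorname{Pic}^d(C_2)$ with fibres projective spaces. Any map from a rationally connected variety to an abelian variety is constant, so the image lies in a single fibre $|L|\cong\mathbb P^n$ of the Abel--Jacobi map. It does not seem easy to conclude constancy directly from this.
\item So instead I would use the line bundle and Picard structure. Write $N_i$ for the Néron--Severi group $\mathrm{NS}(Q_d(E_i,C_i))$; it is generated by $\varphi_i^*\mathrm{NS}(C_i^{(d)})$ together with one relative class, the $\mathcal O(1)$ coming from the determinant of the universal quotient. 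The ray structure of the nef cone should identify $\varphi_i^*$ of the pullback of the theta/$x$-classes.
\end{itemize}

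\emph{Step 2, preferred route: the Albanese.} The cleaner approach is Albanese-theoretic. The Albanese map of $Q_d(E_i,C_i)$ factors as $\varphi_i$ followed by Abel--Jacobi, so $\sigma$ commutes with the Albanese maps. Hence $\sigma$ maps fibres of $Q_1\to\operatorname{Pic}^d(C_1)$ isomorphically onto fibres of $Q_2\to\operatorname{Pic}^d(C_2)$. When $d$ is large, those fibres are projective bundles over $\mathbb P^{d-g}$.

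\emph{Step 3: the uniform argument via extremal contractions (main obstacle).} The real difficulty is the case where the Albanese is trivial or uninformative, for example $g=0$ or $d<g$. Here I would use the Mori cone. Let $\ell$ be a line in one factor $\mathbb P(E_1(x))$ of a general fibre of $\varphi_1$. The class $[\ell]$ spans an extremal ray $R$ of $\overline{NE}(Q_1)$, because $-K$ is relatively ample for $\varphi_1$ over the open set $U_1$. The contraction of the face spanned by these rays (one for each of the $d$ points, all numerically equivalent by monodromy) is exactly $\varphi_1$. Indeed, $\varphi_1$ is the contraction of the face $\varphi_1^*\mathrm{Nef}(C_1^{(d)})^\perp$, and the curves contracted by $\varphi_1$ are exactly those with class proportional to $[\ell]$. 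I would verify the latter by showing $\rho(Q_1)=\rho(C_1^{(d)})+1$. This uses the description of the cohomology of Quot schemes, e.g. the Bia{\l}ynicki-Birula decomposition giving $b_2(Q_d(E,C))=b_2(C^{(d)})+1$ for $d\ge2$.

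\emph{Step 4: conclusion.} The isomorphism $\sigma$ maps extremal rays to extremal rays. The remaining task is to show that $\sigma_*[\ell]$ is again the fibre ray of $\varphi_2$ and not some other $K$-negative extremal ray. To distinguish it, I would use the intrinsic geometry of the ray: its deformations $\ell$ cover $Q_1$, and the family of such lines through a general point has dimension $r-2$, with $-K\cdot\ell=r$. I would compare these invariants with those of any other possible ray; the only candidates come from $C^{(d)}$ itself in genus $0$ or from special $E$. The genus-$0$ case may need separate handling via the explicit projective-bundle structure of $Q_d(E,\mathbb P^1)$.

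Once $\sigma$ preserves the face, $\varphi_2\circ\sigma$ and $\varphi_1$ are the same contraction. This gives $\overline{\sigma}$ as an isomorphism.
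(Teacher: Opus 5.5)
There is a genuine gap. Step 1 (rigidity over the flat contraction $\varphi_1$) is fine, and you correctly identify the key input, that $\varphi_1$ has relative Picard rank one. But the whole argument rests on Step 4, which is not carried out, and the method proposed there cannot work as stated.

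The invariants you list are intrinsic: a covering family of rational curves of $-K$-degree $r$, an $(r-2)$-dimensional family through a general point, and a fibre-type contraction. Suppose $\sigma$ does not descend. Then $\sigma^{-1}_*R_2$ is a second extremal ray of $Q_1$, and it has exactly the same invariants as $R_1$. So comparing invariants only helps if you independently know all extremal rays of $Q_1$ or $Q_2$, and you do not; even the Mori cone of $C^{(d)}$ is not known in general. Your assertion that ``the only candidates come from $C^{(d)}$ in genus $0$ or from special $E$'' is therefore unsupported. A telling symptom is that your argument never uses $d\geq 2$. For $d=1$, $C=\mathbb{P}^1$, $E=\mathcal{O}^{\oplus 2}$, we have $Q_1(E,C)=\mathbb{P}^1\times\mathbb{P}^1$, the swap does not descend, and the two rulings have precisely the invariants you propose to compare.

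The missing idea is to use relative Picard rank one as a dichotomy. If $\sigma$ does not descend, then no curve in any fibre of $\varphi_1$ can be contracted by $\varphi_2\circ\sigma$, since otherwise $\sigma_*R_1=R_2$. Hence $\varphi_2\circ\sigma$ is finite on every fibre of $\varphi_1$. A dimension count gives $d(r-1)\le d$, so $r=2$, and a general fibre $(\mathbb{P}^1)^d$ then surjects onto $C_2^{(d)}$. Your own Step 2 observation now finishes this part: a rationally connected image lies in a fibre of the Abel--Jacobi map, which has dimension $<d$ when $g\geq 1$. This forces $C_2\cong\mathbb{P}^1$, and likewise $C_1\cong\mathbb{P}^1$.

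That leaves the genuinely hard case $r=2$, $C_i\cong\mathbb{P}^1$, which you explicitly defer. Here the paper uses $\rho(Q_1)=2$, so $\varphi_2\circ\sigma$ would be the contraction of the other ray. The section $S\cong\mathbb{P}^d$ of $\varphi_1$ coming from a quotient $E_1\to\mathcal{O}(a)$ of minimal degree spans that other ray, by the Gangopadhyay--Sebastian nef cone computation. So $\sigma(S)$ lies in a fibre of $\varphi_2$, and since fibres are irreducible of dimension $d$, it is a whole fibre. But fibres of $\varphi_2$ are $(\mathbb{P}^1)^d$ or singular, and $\mathbb{P}^d\not\cong(\mathbb{P}^1)^d$ for $d\geq 2$. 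This is exactly where $d\geq 2$ enters.
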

\begin{proof}
Suppose not. We want to get a contradiction. For simplicity of notation, let $Q_i=Q_d(E_i,C_i).$

As \(\varphi_i\)'s are contractions of relative Picard rank $1$ by
\cite[Theorem 11]{GangopadhyaySebastian}, if \(\sigma\) descends to a morphism $\overline{\sigma}\colon C_1^{(d)}\longrightarrow C_2^{(d)},$
then \(\overline{\sigma}\) is an isomorphism. So, \(\sigma\) does not descend to a morphism \(\overline{\sigma}\). Therefore there must be a curve in
$Q_1$ contracted by \(\varphi_1\), but not contracted by
\(\varphi_2\circ\sigma\). As \(\varphi_1\) has relative Picard rank \(1\), no curve contracted by \(\varphi_1\) is contracted by \(\varphi_2\circ\sigma\). In other
words, for all \(z\in C_1^{(d)}\), the map
\[
\varphi_1^{-1}(z)\xrightarrow{\ \varphi_2\circ\sigma\ } C_2^{(d)}
\]
is finite.

As
\[
\dim \varphi_1^{-1}(z)=d(r-1)
\qquad\text{and}\qquad
\dim C_2^{(d)}=d,
\]
we have $d\geq d(r-1)$,
forcing \(r=2\). Now choosing \(z\in C_1^{(d)}\) general, we have $\varphi_1^{-1}(z)\cong (\mathbb P^1)^d.$
So there is a surjection
\[
(\mathbb P^1)^d\longrightarrow C_2^{(d)}.
\]
Composing it with the Abel-Jacobi map from 
$C_2^{(d)}$ to the  Jacobian of $C_2$
and using the fact that abelian varieties have no rational curves, we get $C_2\cong\mathbb P^1.$
Similarly, \(C_1\cong\mathbb P^1\). So
\[
C_1^{(d)}\cong C_2^{(d)}\cong\mathbb P^d.
\]

Note that $Q_1$ has Picard rank \(2\) by
\cite[Theorem 11]{GangopadhyaySebastian}. So, \(\varphi_1\) and
\(\varphi_2\circ\sigma\) must be the contractions of the two rays of the Mori cone of $Q_1$.
Call these rays \(R_1\) and \(R_2\).

Write $E_1\cong \mathcal{O}_{\mathbb{P}^1}(a)\oplus \mathcal{O}_{\mathbb{P}^1}(b)$ with $a\leq b$. The projection $E_1\twoheadrightarrow \mathcal{O}_{\mathbb{P}^1}(a)$ gives rise to a section $\eta$ of $\varphi$, see \cite[(7), p.~1008]{GangopadhyaySebastian}. Let $S\cong \mathbb{P}^d$ be the image of $\eta$.
As shown in \cite[Proof of Theorem 34]{GangopadhyaySebastian}, pullback of a non-numerically trivial nef line bundle on $Q_1$ by $\eta$ is a line bundle on $C_1^{(d)}\cong \mathbb{P}^d$ that is nef but not ample, hence that pullback is trivial. This means the
class of a curve in \(S\) generates \(R_2\). So, $\varphi_2\sigma(S)$
must be a point. Hence $\sigma(S)\cong\mathbb P^d$
lies in a fibre of \(\varphi_2\).

By \cite[Proposition~6.1]{GangopadhyaySebastianFundamental}, each fibre of \(\varphi_2\)
is irreducible of dimension \(d\). So $\varphi_2^{-1}(w)\cong\mathbb P^d$
for some \(w\in C_2^{(d)}\). But that is impossible: if \(w\) is not in the
big diagonal, then $\varphi_2^{-1}(w)\cong(\mathbb P^1)^d,$
whereas \(\varphi_2^{-1}(w)\) is singular if \(w\) is in the big diagonal,
by Lemma \ref{Singular fibre}. This gives a contradiction.
\end{proof}

Now, by Lemma \ref{Singular fibre}, \(\overline{\sigma}\) must preserve the big diagonals. By Theorem \ref{A}, \(\overline{\sigma}=h^{(d)}\) for some isomorphism $h:C_1\longrightarrow C_2.$
This \(h\) induces a natural isomorphism
\[
Q_d(h^*E_2,C_1)\longrightarrow Q_d(E_2,C_2).
\]
Composing \(\sigma\) with the inverse of this natural isomorphism, we may assume $C_1=C_2=C$ (say)
and that \(\sigma\) is over \(C^{(d)}\). Now Theorem \ref{B} follows from Theorem \ref{B1}.
\qed

\textit{Proof of Supplement \ref{suppB}:}

\underline{$(2)\Rightarrow(1)$:} It follows from the standard fact that $C^{(d)}$ is a projective bundle over $C$ for any elliptic curve $C$ and $d\geq 2$.

\medskip

\underline{$(1)\Rightarrow(2)$:} Let $g_i=g(C_i)$, $Q_i=Q_{d_i}(E_i,C_i)$. Equating dimensions, we get
\begin{equation}\label{dimension}
d_1r_1=d_2r_2.
\end{equation}
So, $r_2<r_1$. If $r_2\geq 2$, then the same (but easier) argument as in the proof of Claim \ref{descend} gives a contradiction. So, $r_2=1$, hence $Q_2\cong C_2^{(d_2)}.$

Suppose $d_1\geq 2$. Using the Poincaré series computed in \cite[Remark 4.6]{BagnarolFantechiPerroni2020}, we get
\begin{equation}\label{betti 1}
b_1\bigl(C_2^{(d_2)}\bigr)=2g_2,\qquad
b_2\bigl(C_2^{(d_2)}\bigr)=\binom{2g_2}{2}+1.
\end{equation}
and
\begin{equation}\label{betti 2}
b_1(Q_1)=2g_1,\qquad
b_2(Q_1)=b_2\bigl(C_1^{(d_1)}\bigr)+1
=\binom{2g_1}{2}+2.
\end{equation}
Here the two equalities in the description of $b_2(Q_1)$ use $r_1\geq 2$ and $d_1\geq 2$, respectively, and in the computation of $b_2\bigl(C_2^{(d_2)}\bigr)$ we used $d_2\geq 2$. From \eqref{betti 1} and \eqref{betti 2}, one immediately gets a contradiction to $Q_1\cong C_2^{(d_2)}.$

So, $d_1=1$. Now $d_2=r_1$ by \eqref{dimension}. So, $C_2^{(d_2)}$ is a projective bundle over $C_1$, hence the Abel--Jacobi map $C_2^{(d_2)}\longrightarrow \operatorname{Jac}(C_2)$
factors through $C_1$, hence has image of dimension $\leq 1$. From this one easily gets $C_1\cong C_2$ and both are elliptic curves.
\qed

\textit{Proof of Corollary \ref{aut}:} Immediate from Theorem \ref{B} and Lemma \ref{commute}.
\qed

\begin{remark}
    For a smooth projective curve $C$ and an integer $r\geq 2$, any map $\mathbb{P}^{r-1} \longrightarrow C$
is constant, except when \(r=2\) and \(C \cong \mathbb{P}^1\). Since $Q_1(E,C)=\mathbb{P}_C(E)$, one easily sees that if $d=1$, then under the remaining hypothesis of Theorem \ref{B}, $\sigma$ is natural, except when $C_i\cong \mathbb{P}^1$ and $E_i\cong \mathcal{O}_{C_i}^2$ up to line bundle twists. In the latter case $Q_i(E_i,C_i)\cong \mathbb{P}^1\times \mathbb{P}^1$, and $\sigma$ might exchange the two factors.
\end{remark}
\begin{remark}
If we have a family of vector bundles on a family of smooth projective curves, one can define the relative Quot scheme as in \cite{Gangopadhyay2019}. Applying Corollary \ref{aut} to each member of the family separately and looking at the relative automorphism group schemes, one can show that \cite[Theorem 2.5(b)]{Gangopadhyay2019} holds without the assumption of semistability and genus made there, and \cite[Theorem 2.5(a)(i)]{Gangopadhyay2019} holds for the full relative automorphism groups, not merely the identity components.
\end{remark}
\section{Torelli theorem for Kummer-Quot scheme}
For an elliptic curve $C$ with origin $O$ and a positive integer $d$, we know that the sum map $C^{(d)}\xlongrightarrow{b} C$ is a $\mathbb{P}^{d-1}$-bundle. We can naturally identify $C$ with the variety $\operatorname{Pic}^d(C)$ parametrizing degree $d$ line bundles on $C$, and $b$ corresponds to the Abel--Jacobi map under this identification. Let $C^{((d))}\cong \mathbb{P}^{d-1}$
be the fibre of $b$ over $O$. The big diagonal in $C^{((d))}$ is defined to be the intersection of the big diagonal of $C^{(d)}$ with $C^{((d))}$.

Suppose $C_i$ is an elliptic curve with origin $O_i$, for $i=1,2$, and $h:C_1\longrightarrow C_2$ is an isomorphism of varieties
such that
\begin{equation}\label{d-torsion}
h^*\mathcal{O}_{C_2}(d\cdot O_2)
\cong
\mathcal{O}_{C_1}(d\cdot O_1)
\end{equation}  This condition is equivalent to saying that $h(O_1)$ is a $d$-torsion point of $C_2$, that is, $h$ preserves the finite subgroups $C_i[d]$ of $d$-torsion points. Such an $h$ induces an isomorphism $C_1^{((d))}
\xrightarrow{\,h^{((d))}\,}
C_2^{((d))}.$
These are called natural isomorphisms. Note that natural isomorphisms preserve big diagonals. Conversely, we can show the following.

\begin{manualtheorem}{A$^{\prime}$}\label{A'}
Let $C_1$ and $C_2$ be elliptic curves, and $d\geq 2$ an integer. Suppose $\varphi:C_1^{((d))}\longrightarrow C_2^{((d))}$
is an isomorphism preserving the big diagonals. Then $\varphi$ is natural.
\end{manualtheorem}

\begin{proof}
The proof is similar to the $g=0$ case of the proof of Theorem \ref{A}. Let $O_i$ be the origin of $C_i$ and $V_i=H^0\bigl(C_i,\mathcal{O}_{C_i}(d\cdot O_i)\bigr).$
The complete linear system $|d\cdot O_i|$ gives a map $C_i\xrightarrow{\,\eta_i\,}\mathbb{P}(V_i).$
We also have a natural identification $
C_i^{((d))}=\mathbb{P}(V_i^*).$

If $d\geq 3$, then $\eta_i$ is an embedding, and the big diagonal in
$C_i^{((d))}$ is the dual variety of $\eta_i(C_i)$. Thus $\varphi$
induces an isomorphism $\psi:\mathbb{P}(V_1)\longrightarrow \mathbb{P}(V_2)$
preserving $\eta_i(C_i)$'s. In other words, there is an isomorphism $h:C_1\longrightarrow C_2$
making the following diagram commute:
\begin{equation}\label{comm diag}
\begin{tikzcd}
C_1 \arrow[r,"h"] \arrow[d,"\eta_1"'] &
C_2 \arrow[d,"\eta_2"] \\
\mathbb{P}(V_1) \arrow[r,"\psi"'] &
\mathbb{P}(V_2).
\end{tikzcd}
\end{equation}
From \eqref{comm diag}, one sees that $h$ satisfies \eqref{d-torsion}. Thus $\psi$, and hence
$\varphi$, is induced by $h$.

For $d=2$, we have $\dim V_i=2$. So lines and hyperplanes in $V_i$
are the same thing, hence we can identify $\mathbb{P}(V_i)=\mathbb{P}(V_i^*).$
In this case, $\eta_i$ is a double cover with branch locus the same
as the big diagonal, which consists of four distinct points. Since a
double cover of $\mathbb{P}^1$ with branch locus four distinct points is
uniquely determined by the branch points, we get isomorphisms $h:C_1\longrightarrow C_2$ and
$\psi:\mathbb{P}(V_1)\longrightarrow\mathbb{P}(V_2)$
making \eqref{comm diag} commute. Now one finishes the proof as in the $d\geq 3$
case.
\end{proof}

Before continuing our discussion, we prove the following Lemma.

\begin{lemma}\label{smooth albanese}
Let $E$ be a nonzero vector bundle on an elliptic curve $C$, and $d$ a positive integer, and $Q=Q_d(E,C)$. Let $Q\xrightarrow{\varphi} C^{(d)}$
be the Hilbert--Chow map and $C^{(d)}\xrightarrow{b} C$
the sum map. Then $a:=b\circ\varphi$ is smooth and is the same as the Albanese map of $Q$. If $E$ is semihomogeneous, then we further have $T_Q\cong \mathcal{O}_Q\oplus T_a.$
\end{lemma}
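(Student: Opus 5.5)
Throughout, $C$ is an elliptic curve with origin $O$, and $Q=Q_d(E,C)$ with $\varphi$, $b$, $a=b\circ\varphi$ as in the statement. The plan has three steps.

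\emph{Step 1: smoothness of $a$ via translation equivariance.} For $x\in C$ let $t_x$ be translation by $x$. Since $t_x^*E$ and $E$ are in general not isomorphic, $t_x$ need not act on $Q$ itself. Instead I would use the multiplication-by-$d$ isogeny $[d]\colon C\to C$, which is \'etale. Form the fibre product
\[
Q' := Q\times_{C} C
\]
of $a$ with $[d]$; the projection $Q'\to Q$ is \'etale. Over $y\in C$ with $dy=c$, the fibre $a^{-1}(c)$ consists of quotients supported on divisors $D$ with sum $c$. Then $D-y$ (translating each point of $D$ by $-y$) has sum $O$, and the quotient is carried along by $t_{-y}$. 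This should identify $Q'$ with a twisted family over $C$ whose fibres are quotients of the bundles $t_y^*E$ supported on divisors in $C^{((d))}$.

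\emph{Step 2: avoiding the twisting problem.} The fibres in Step 1 are not literally constant, so I would avoid that issue with a cleaner route. First, $b\colon C^{(d)}\to C$ is smooth, being a $\mathbb{P}^{d-1}$-bundle. Second, $\varphi$ is flat with fibres of constant dimension $d(r-1)$, where $r$ is the rank of $E$. Smoothness of $a$ is then equivalent to surjectivity of $da$ at every point, by the dimension count with $Q$ smooth. To get surjectivity, I would exhibit at each $q\in Q$ a tangent vector mapping onto $T_{a(q)}C$. Take $q$ corresponding to $K\subseteq E$ with quotient $F$. Choose a local analytic disc of translations $y\mapsto t_y$ near $0$. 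Locally near $\operatorname{supp}F$, trivialize $E$ on small discs around the finitely many support points. This lets one transport $F$ along translation: trivializations make $t_y^*E\cong E$ near the support, giving a curve $y\mapsto q_y$ in $Q$ with $a(q_y)=a(q)+dy$. Hence $da$ is surjective and $a$ is smooth. The fibres of $a$ are connected, because $\varphi$ and $b$ are contractions, so $a_*\mathcal{O}_Q=\mathcal{O}_C$.

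\emph{Step 3: identification with the Albanese map.} Since $b_1(Q)=2$ (from the Betti numbers cited in the proof of Supplement~B) and $a$ is a surjective map with connected fibres onto $C$, the universal property gives a factorization $\mathrm{Alb}(Q)\to C$. This factorization is surjective between one-dimensional abelian varieties, and it is an isomorphism because the fibres of $a$ are connected. Alternatively, each fibre of $a$ is rationally connected, being fibred over $\mathbb{P}^{d-1}$ with rationally connected fibres, so it maps to a point in $\mathrm{Alb}(Q)$; this also gives the factorization.

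\emph{Step 4: the semihomogeneous case.} By the definition above, $\operatorname{Aut}^0(\mathbb P_C(E))\to\operatorname{Aut}^0(C)$ is surjective. So there is a connected algebraic group $G\subseteq\operatorname{Aut}^0(\mathbb P_C(E))$ mapping onto the translations. Its natural action on $Q$ covers translation by $d\cdot(\,)$ on $C$ through $a$. Differentiating gives a map of Lie algebras $\mathfrak g\to H^0(Q,T_Q)$. Composing with $da$ yields a vector field on $Q$ that maps to a nowhere-vanishing section of $a^*T_C\cong\mathcal{O}_Q$. Choosing a preimage of the generator of $H^0(T_C)$ then gives a splitting $\mathcal{O}_Q\to T_Q$ of the exact sequence
\[
0\to T_a\to T_Q\to a^*T_C\cong\mathcal{O}_Q\to 0 ,
\]
so $T_Q\cong\mathcal{O}_Q\oplus T_a$.

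The main obstacle I expect is Step 2: making the local-translation argument rigorous as an honest first-order deformation in the Quot functor. I would first try to phrase it via the infinitesimal action of $H^0(\operatorname{at}E)$ locally: locally a lift of $\partial/\partial z$ to $\operatorname{At}(E)$ exists, and the resulting local vector fields on $Q$ map under $d\varphi$ to the diagonal translation field on $C^{(d)}$, whose image under $db$ is nonzero.
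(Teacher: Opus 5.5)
Your proposal is correct, but your proof of smoothness of $a$ follows a different route from the paper's.

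\textbf{How the paper argues.} It first treats semihomogeneous $E$. In rank one it uses Schr\"oer's result and a lemma of Sarkar. In rank $\geq 2$ it uses automorphism groups, invoking Corollary~B for the action of $\operatorname{Aut}^0(\mathbb P_C(E))$ on $Q$. For arbitrary $E$ it then uses the Gangopadhyay--Sebastian formula $(da)_q=\operatorname{tr}\circ(-\delta)$. Since $\operatorname{Ext}^1(E,F)=0$, surjectivity of $(da)_q$ depends only on the quotient $F$. So one may replace $E$ by a trivial bundle surjecting onto $F$, which reduces to the semihomogeneous case.

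\textbf{How you argue.} You prove smoothness for every $E$ at once, by lifting the translation field locally near the finite support of $F$. This is the same principle as the paper's (everything is local at $\operatorname{supp}F$), made explicit. It is more elementary and self-contained: it needs neither the trace formula nor Theorem~B and Corollary~B. The paper's route instead buys the intrinsic statement that surjectivity of $(da)_q$ depends only on $F$. Your Step~1 can simply be dropped.

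\textbf{Making Step~2 algebraic.} Your own suggestion works, and here it is explicitly.
\begin{enumerate}
\item Let $v$ be the invariant vector field on $C$. Choose an open $W\supseteq\operatorname{supp}F$ and a section $\tilde v$ of $\operatorname{At}(E)$ over $W$ lifting $v$.
\item Define $\xi(s)=\tilde v(s)\bmod K$. This is $\mathcal O$-linear, since $\tilde v(fs)-f\tilde v(s)=v(f)s\in K$. Because $\operatorname{\mathcal{H}om}(K,F)$ is supported on $\operatorname{supp}F$, this gives $\xi\in\operatorname{Hom}(K,F)=T_qQ$.
\item Up to sign, $\xi$ is the first-order deformation $(1+\epsilon\tilde v)(K[\epsilon])$. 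Its quotient is $F[\epsilon]$ with module structure twisted by the ring automorphism $1+\epsilon v$ of $\mathcal O_W[\epsilon]$.
\item Since $\varphi$ is given by $\operatorname{Fit}_0$, $d\varphi(\xi)$ is the infinitesimal translate of $[\operatorname{div}F]$ along $v$. Then $b(t_xD)=b(D)+dx$ gives $da(\xi)=\pm d\cdot v\neq 0$.
\end{enumerate}

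\textbf{Steps 3 and 4.} Your Step~4 matches the paper's rank $\geq 2$ argument. You rightly use only the existence of the action, not the isomorphism from Corollary~B. Letting translations act on $C^{(d)}$ handles rank one the same way, so the appeal to Schr\"oer and Sarkar becomes unnecessary. Your Albanese argument is also valid: $b_1(Q)=2$ gives a one-dimensional Albanese, and connected fibres of $a$ rule out a nontrivial isogeny. Your alternative through rationality of the fibres of $\varphi$ and $b$ is exactly the paper's argument, and it avoids the Betti-number input.
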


\begin{proof}
Let $Q\xrightarrow{a_1}A$
be the Albanese map of $Q$. As fibres of $\varphi$ and $b$ are rational, there is $C\xrightarrow{\psi}A$
with $a_1=\psi\circ a$. By the universal property of the Albanese map, there is $A\xrightarrow{\phi}C$
with $\phi\circ a_1=a$. This implies $\phi\circ\psi=\operatorname{id}$. Further, since $A$ is generated as an abelian variety by $a_1(Q)=\psi(C)$, we get $\psi$ is surjective. This forces $\phi$ and $\psi$ to be isomorphisms. Thus $a$ is the same as the Albanese map of $Q$.

First, we prove the Lemma when $E$ is semihomogeneous. If $\operatorname{rank} E=1$, then $Q=C^{(d)}$ so $a=b$ is a $\mathbb{P}^{d-1}$-bundle structure, hence smooth. By \cite[Corollary 9.2]{schroer2009hilbert}, the pullback of $Q\xrightarrow{a}C$
under the "multiplication by $d$"-map $C\xrightarrow{\times d}C$
is projection from a product. Now by \cite[Lemma 6.1]{SarkarTangentHilbert} we have $T_Q\cong \mathcal O_Q\oplus T_a.$ This is the same (easier in this case) proof as in \cite[Proof of (1) and (2), Theorem A]{SarkarTangentHilbert}.  Now assume $\operatorname{rank}E\geq 2$ and $E$ is semihomogeneous. We have a commutative diagram
\[
\begin{tikzcd}
\operatorname{Aut}^0(\mathbb P_C(E))
    \arrow[r,"\cong"]
    \arrow[d]
&
\operatorname{Aut}^0(Q)
    \arrow[d]
\\
\operatorname{Aut}^0(C)
    \arrow[r,"\times d"]
&
\operatorname{Aut}^0(C).
\end{tikzcd}
\]

Here we are identifying $\operatorname{Aut}^0(C)\cong C$ via translations. The top horizontal map is an isomorphism
by Corollary \ref{aut}. The first vertical map is induced by the projection $\mathbb P_C(E)\longrightarrow C,$
and is surjective as $E$ is semihomogeneous. The second vertical map is
induced by the contraction $a$, and has to be surjective, as seen from the diagram. Thus
the map on Lie algebras
\[
H^0(Q,T_Q)\longrightarrow H^0(C,T_C)\cong\mathbb C
\]
is nonzero, hence surjective. So there is a section of $T_Q$ mapping to a nowhere vanishing section of $T_C$ under this map. This gives a section of the map $da:T_Q\longrightarrow a^*T_C.$
Hence $a$ is smooth and $T_Q\cong \mathcal O_Q\oplus T_a.$

Now let $E$ be any nonzero vector bundle. By \cite[Lemma 2.7]{GangopadhyaySebastianPicard}, at a point $\beta$ of $Q$ corresponding to an exact sequence
\begin{equation}\label{beta}
0\longrightarrow K\longrightarrow E\longrightarrow F\longrightarrow 0, 
\end{equation} the tangent map
$(da)_\beta$ is the composition
\[
\operatorname{Hom}(K,F)\xlongrightarrow{-\delta}
\operatorname{Ext}^1(F,F)
\xrightarrow{\operatorname{tr}}
H^1(C,\mathcal{O}_C),
\]
where the first map is obtained by applying $\operatorname{Hom}(-,F)$ to \eqref{beta}, and the second map is the trace map. Since $F$ is torsion, it has zero-dimensional support, so $\operatorname{Ext}^1(E,F)=H^1(C,E^*\otimes F)=0.$
So, \eqref{beta} shows $-\delta$ is surjective. Thus,
\begin{equation}\label{equivalence}
(da)_\beta\text{ is surjective}
\Longleftrightarrow
\operatorname{tr}\text{ is surjective}.
\end{equation}
The upshot is that the second statement in \eqref{equivalence} only depends on $F$ and not on $E$. We can choose a semihomogeneous vector bundle $E'$ on $C$ with a surjection $E'\longrightarrow F.$
Indeed, since $F$ is globally generated, we can choose $E'$ to be a trivial bundle. Now let $Q'=Q_d(E',C)$, and $Q'\xrightarrow{a'} C$
the Albanese map.  Now $a'$ is smooth by the last statement of the Lemma, which we have already proved. By \eqref{equivalence}, applied to $Q'$ instead of $Q$, we get $\operatorname{tr}$ is surjective. Thus $(da)_\beta$ is surjective, hence $a$ is smooth at $\beta$.
\end{proof}
Let $E$ be a nonzero vector bundle on an elliptic curve $C$ with origin $O$, and $a:Q_d(E,C)\longrightarrow C$
the map as in Lemma \ref{smooth albanese}.
Define $Q_d'(E,C)$ to be the fibre of $a$ over $O$. This is a smooth projective variety by Lemma \ref{smooth albanese}. We call it a Kummer-Quot scheme, by its analogy with the generalized Kummer hyperkähler manifold. The restriction of the Hilbert--Chow morphism gives a morphism $Q_d'(E,C)\longrightarrow C^{((d))},$
which we also call the Hilbert--Chow morphism.

Let $E_i$ be a nonzero vector bundle on an elliptic curve $C_i$ with origin
$O_i$, for $i=1,2$. Given an isomorphism $h:C_1\longrightarrow C_2$
satisfying \eqref{d-torsion}, a line bundle $L$ on $C_1$, and an isomorphism of vector bundles $E_1\xrightarrow{\sim}h^*E_2\otimes L$
on $C_1$, the restriction of the induced natural isomorphism $Q_d(E_1,C_1)\longrightarrow Q_d(E_2,C_2)$
gives an isomorphism
\[
Q_d'(E_1,C_1)\longrightarrow Q_d'(E_2,C_2),
\]
which we call a natural isomorphism.

Given $r\geq 2$ and an elliptic curve $C$, the restriction of $\alpha_{C,r}$
gives an involution of $Q_2'\bigl(\mathcal{O}_C^{\oplus r},C\bigr)$
over $C^{((2))}$, but this is the natural involution induced by the inverse map $i:C\to C$.
\begin{manualtheorem}{B$^{\prime}$}\label{B'}
Let \(C_1\) and \(C_2\) be elliptic curves, \(E_i\) a vector bundle on \(C_i\) of rank \(r\geq 2\), and \(d\geq 2\) an integer. Suppose $\sigma\colon Q_d'(E_1,C_1)\longrightarrow Q_d'(E_2,C_2)$
is an isomorphism. Then $\sigma$ is natural.
\end{manualtheorem}
\begin{proof}
The proof is very similar to the proof of Theorem \ref{B}. We outline the steps.
First we prove the analogue of Theorem~4.1.

\begin{manualtheorem}{4.1$^{\prime}$}
Let \(C\) be an elliptic curve, \(E_1\) and $E_2$ vector bundles on \(C\) of rank \(r\geq 2\), and \(d\geq 2\) an integer. Suppose $\sigma\colon Q_d'(E_1,C)\longrightarrow Q_d'(E_2,C)$
is an isomorphism over $C^{((d))}$. Then $\sigma$ is natural.
\end{manualtheorem}

\begin{proof}
We use the notations in the Proof of Theorem \ref{B1}. Additionally, let $Q'_i=Q_d'(E_i,C),$  
$\varphi_i':Q'_i\longrightarrow C^{((d))}$
be the Hilbert--Chow morphisms, $T_i'=T_{Q'_i}$, $U'\subset C^{((d))}$ the
complement of the big diagonal, and for $[D]\in U'$, let $\sigma_D$ be the restriction of $\sigma$ over $[D]$. Also, let $\Sigma' \subset C\times C^{((d))}$
be the intersection of $\Sigma_d(C)$ with $C\times C^{((d))}$, and let
\[
\Sigma' \xrightarrow{q'} C,
\qquad
\Sigma' \xrightarrow{p'} C^{((d))}
\]
be the projections. Note that $q'$ is a $\mathbb{P}^{d-2}$-bundle. Let $V_i'=(\varphi_i')^{-1}(U'),$ and
$f_i':V_i'\longrightarrow U'$
be the restriction of $\varphi_i'$.

For a vector bundle $E$ on $C$, define a vector bundle $E^{[[d]]}$ on
$C^{((d))}$ by
\[
E^{[[d]]}=p'_*(q')^*E.
\]
As $p'$ is finite flat, cohomology and base change (\cite[Tag 0D4E]{stacks-project}) shows
\[
E^{[[d]]}\cong E^{[d]}\big|_{C^{((d))}}.
\]

In the following, we omit the subscript $i$ on $\varphi$, $\varphi'$, $Q$, $Q'$, $E$, $K$ and $K'$ for simplicity. Define $K'=\varphi'_*T_{\varphi'}$, the kernel of the natural map $\varphi'_*T_{Q'}\longrightarrow T_{C^{((d))}}$ and $K=\varphi_*T_{\varphi}$ as in the Proof of Theorem \ref{B1}. 

As $a$ and $b$ are smooth, $d\varphi$ induces isomorphism of the normal bundle of $Q'$ and the pullback of the normal bundle of $C^{((d))}$, both of which are trivial. This means $T_{\varphi'}$ is the kernel of $T_Q|_{Q'}
\xrightarrow{\,d\varphi\,}
(\varphi')^*\left(T_{C^{(d)}}\big|_{C^{((d))}}\right).$ Thus $K'$ is the kernel of the natural map $\varphi'_*(T_Q|_{Q'})\to T_{C^{(d)}}\big|_{C^{((d))}}.$ Using \cite[Tag 0D4E]{stacks-project} and
\cite[Thm.~1.1]{BiswasGangopadhyaySebastian}, we have $\varphi_*T_Q\big|_{C^{((d))}}
\cong
\varphi'_* \bigl(T_Q|_{Q'}\bigr).$ Thus $K'$
is the kernel of the natural map $\varphi_*T_Q\big|_{C^{((d))}}
\longrightarrow
T_{C^{(d)}}\big|_{C^{((d))}}.$ 
\begin{claim}
$K'=K\big|_{C^{((d))}}$.
\end{claim}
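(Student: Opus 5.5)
The idea is to compare two left-exact sequences on $C^{((d))}$. By the discussion just above, $K'$ is the kernel of the restricted map
\[
\psi|_{C^{((d))}}\colon \varphi_*T_Q\big|_{C^{((d))}}\longrightarrow T_{C^{(d)}}\big|_{C^{((d))}}.
\]
On the other hand, $K$ is the kernel of $\psi\colon\varphi_*T_Q\to T_{C^{(d)}}$. Restriction is only right exact, so the claim amounts to showing that forming the kernel of $\psi$ commutes with restriction to $C^{((d))}$. I would obtain this from exactness on the right of the sequence $0\to K\to\varphi_*T_Q\to T_{C^{(d)}}$, together with flatness of the cokernel.

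\textbf{Step 1: describe the image of $\psi$.} By Claim \ref{Ki} and its proof, $\psi=\delta\circ t$, where
\[
t\colon \varphi_*T_Q\cong(\operatorname{at}E)^{[d]}\longrightarrow T_C^{[d]}
\]
is the map induced from \eqref{atiyah}. Since $p$ is finite, $p_*$ is exact, so $t$ is surjective. The map $\delta\colon T_C^{[d]}\to T_{C^{(d)}}$ is injective. Hence $\psi$ has image $T_C^{[d]}$, and we get a short exact sequence
\[
0\to K\to\varphi_*T_Q\to T_C^{[d]}\to 0
\]
of vector bundles on $C^{(d)}$. Since $T_C^{[d]}$ is locally free, restricting to $C^{((d))}$ keeps this sequence exact:
\[
0\to K|_{C^{((d))}}\to\varphi_*T_Q|_{C^{((d))}}\to T_C^{[d]}|_{C^{((d))}}\to 0.
\]

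\textbf{Step 2: compare kernels after restriction.} It remains to show that $\delta|_{C^{((d))}}\colon T_C^{[d]}|_{C^{((d))}}\to T_{C^{(d)}}|_{C^{((d))}}$ is injective. Once that holds, the kernel of $\psi|_{C^{((d))}}$ equals the kernel of $t|_{C^{((d))}}$, which is $K|_{C^{((d))}}$, and the claim follows.

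Both sheaves are vector bundles of rank $d$ on the integral variety $C^{((d))}\cong\mathbb P^{d-1}$. An injective map between locally free sheaves of the same rank on an integral scheme is the same as a generically injective one. So it suffices to check injectivity of $\delta$ on fibres at a general point $[D]\in C^{((d))}$.

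A general $D\in C^{((d))}$ is reduced, because the big diagonal meets the fibre $C^{((d))}$ in a proper closed subset. This is the content of the big diagonal being dual to $\eta(C)$, as used in the proof of Theorem \ref{A'}. For reduced $D$, the proof of Claim \ref{Ki} shows that $\delta([D])$ is injective, since its kernel $\operatorname{Hom}_D(\Omega_D,\mathcal O_D)$ vanishes.

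\textbf{Expected obstacle.} The main point needing care is the surjectivity of $t$, which relies on the natural isomorphism of \cite[Theorem 1.1]{BiswasGangopadhyaySebastian}. One also needs the resulting identification $K=\ker t$ to be compatible with base change to $C^{((d))}$. I would verify this through the explicit description $\varphi_*T_Q|_{C^{((d))}}\cong\varphi'_*(T_Q|_{Q'})$ obtained above via \cite[Tag 0D4E]{stacks-project}. The rest of the argument is formal.
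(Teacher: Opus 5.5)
Your proposal is correct. Step 1 matches the paper: both of you restrict \eqref{ses1}, which stays exact because $T_C^{[d]}$ is locally free. The difference is in how $\delta$ is handled after restriction.

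The paper proves that the restriction of \eqref{ses2} to $C^{((d))}$ is exact by analysing the torsion cokernel $\mathcal{G}$. From the length-one locally free resolution it deduces that $\mathcal{G}$ is Cohen--Macaulay with support of pure codimension one. So its only associated point is the generic point of the big diagonal, which does not lie on the Cartier divisor $C^{((d))}$, and a Tor computation then gives exactness.

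You instead prove injectivity of $\delta|_{C^{((d))}}$ directly. Its source is locally free on the integral scheme $C^{((d))}\cong\mathbb{P}^{d-1}$, so the kernel is torsion-free. Fibrewise injectivity at a single reduced $[D]\in C^{((d))}$, where $\operatorname{Hom}_D(\Omega_D,\mathcal{O}_D)=0$, therefore forces the kernel to vanish.

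Since $\operatorname{Tor}_1(\mathcal{G},\mathcal{O}_{C^{((d))}})$ is exactly the kernel of $\delta|_{C^{((d))}}$, the two arguments establish the same fact. Yours is more elementary and avoids the depth and associated-point analysis. The paper's uses only that $C^{((d))}$ is a Cartier divisor missing the generic point of the big diagonal, so it would work without integrality of the restricting divisor.

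Two minor remarks. First, that a general point of $C^{((d))}$ is reduced is elementary: take general distinct $x_1,\dots,x_{d-1}$ and set $x_d=-\sum_{j<d}x_j$. So the appeal to the dual-variety description is unnecessary. Second, the base-change compatibility of $K=\ker t$ that you list as an obstacle is already supplied by the exactness of the restricted \eqref{ses1}, so nothing further needs checking there.
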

\begin{proof}

Note that by Claim \ref{Ki} we have short exact sequences
\begin{equation}\label{ses1}
0\longrightarrow K
\longrightarrow \varphi_*T_Q
\longrightarrow T_C^{[d]}
\longrightarrow 0,
\end{equation}
and
\begin{equation}\label{ses2}
0\longrightarrow T_C^{[d]}
\longrightarrow T_{C^{(d)}}
\longrightarrow \mathcal{G}
\longrightarrow 0,
\end{equation}
where $\mathcal{G}$ is a torsion sheaf. It suffices to show the restriction of both sequences to $C^{((d))}$ are exact. As \eqref{ses1} is a sequence of vector bundles,
its restriction to $C^{((d))}$ is exact. To show the restriction of \eqref{ses2} to $C^{((d))}$ is also exact, it suffices to show that the effective Cartier divisor $C^{((d))}$ does not contain
any associated point of $\mathcal{G}$: this implication follows from a computation of Tor groups and using \cite[Tag 00LD]{stacks-project}. 

Note that the support of $\mathcal{G}$ is contained in the big diagonal. Using the locally free resolution of $\mathcal{G}$ as in \eqref{ses2} and \cite[Tag 090U]{stacks-project}, we see that $\mathcal{G}$ is Cohen-Macaulay with support having pure codimension $1$ in $C^{(d)}$. Thus the only associated point of $\mathcal{G}$ is the generic point of the big diagonal of $C^{(d)}$, which is not contained in $C^{((d))}$. This completes the proof.
\end{proof}
Now 
Using Claim \ref{Ki} and restricting to $C^{((d))}$, we obtain 
\begin{manualclaim}{4.2$'$}
    $K_i'\cong (\operatorname{ad}E_i)^{[[d]]}.$
\end{manualclaim}
By a similar proof as in Claim \ref{centroid} we get
\begin{manualclaim}{4.3$'$}
$\operatorname{Cent}\bigl((\operatorname{ad}E_i)^{[[d]]}\bigr)
=
p'_*\mathcal O_{\Sigma'}.$
\end{manualclaim}
Thus we obtain an isomorphism $\Sigma' \xrightarrow{\beta'} \Sigma'$
over $C^{((d))}$. By the same proof as Lemma \ref{noauto}, we get $\beta'=\operatorname{id}$
or $d=2$. Here, in the proof of Lemma \ref{noauto}, one has to replace $Y$ by
\[
Y'=\{(x_1,x_2,\ldots,x_d)\in Y\mid \sum_{i=1}^dx_i=O\text{ in }C\}.
\] Note that $Y'$ is a dense open subset of the set of all $\underline{x}\in C^d$ whose coordinates sum to $O$ in $C$. This latter set is isomorphic to $C^{d-1}$ as any choice of first $d-1$ coordinates uniquely determine the last coordinate.
Thus $Y'$ is isomorphic to an open subscheme of $C^{d-1}$, hence a smooth variety.

Continuing the proof, we proceed similarly as in Theorem \ref{B1}, and analyze the two cases separately. In Case 1, one defines $h(x,D)$ for $[D]\in U'$ and $x\leq D$ as in Claim \ref{claim 3}. The remaining part of Case 1 proceeds as before. A subtle point is the following:  $h(x)$ is defined a priori for all $x\in C$ such that $x\leq D$ for some $[D]\in U'$. When $d\geq 3$, this is satisfied for all $x$, but for $d=2$ this is satisfied if and only if $x$ is not $2$-torsion. But this does not cause any difficulty, as $\eta(x)$ is defined for all $x$, and is inner for all $x$ in a dense open set, hence by continuity for all $x\in C$. Thus we can uniquely define $h(x)$ for all $x\in C$.

The analysis of Case 2 is easier. Replacing $\sigma$ by its composition with the natural isomorphism $Q_d'(i^*E_1,C_1)\to Q_d'(E_1,C_1)$ induced by the inverse map $i:C_1\to C_1$, we can assume $\beta=\operatorname{id}$. Thus we are back to Case 1.
\end{proof}
Now we are ready to prove Theorem \ref{B'}. By the same proof as in Theorem \ref{B}, it suffices to show the following.
\begin{manualclaim}{4.6$^{\prime}$}\label{4.6'}
$\sigma$ descends to an isomorphism $\bar{\sigma}:C_1^{((d))}\longrightarrow C_2^{((d))}.$
\end{manualclaim}
\begin{proof}
Suppose not. We want to get a contradiction. This is similar to the proof of
Claim \ref{descend}. The main ingredient in that proof was the fact that $\varphi_i$
has relative Picard rank $1$. The same proof as in \cite[Theorem 11]{GangopadhyaySebastian}
shows that $\varphi_i'$ also has relative Picard rank $1$. Now since $
\dim C_2^{((d))}=d-1,$
the same proof as Claim \ref{descend} shows $d-1\geq d(r-1).$
But this is impossible as $r\geq 2$.
\end{proof}

This finishes the proof of Theorem~$B^{'}$.
\end{proof}
\begin{manualsupplement}{B$^{\prime}$}
Let $d_i$ and $r_i$ be positive integers and $C_i$ an elliptic curve, for
$i=1,2$. Suppose $d_1<d_2$. Then the following are equivalent:

\begin{enumerate}
    \item There are vector bundles $E_i$ on $C_i$ of rank $r_i$ with $    Q_{d_1}'(E_1,C_1)\cong Q_{d_2}'(E_2,C_2).$
    \item $d_1=r_2=1,$ and $d_2=r_1.$
\end{enumerate}
\end{manualsupplement}
\begin{proof}
\underline{$(2)\Rightarrow(1)$:}
Clear, as both sides are $\mathbb{P}^{d_2-1}$.

\medskip

\underline{$(1)\Rightarrow(2)$:}
Equating dimensions, we get $r_2<r_1.$
If $r_2\geq 2$, the same proof as Claim \ref{4.6'} gives a contradiction.
So, $r_2=1$, hence $Q_{d_2}'(E_2,C_2)\cong\mathbb{P}^{d_2-1}.$ Thus $Q_{d_1}'(E_1,C_1)\cong\mathbb{P}^{d_2-1}.$

Since $r_1\geq2$, $Q_{d_1}'(E_1,C_1)$ has a non-isomorphic contraction to
$\mathbb{P}^{d_1-1}$. This forces $d_1=1$. Now dimension consideration shows $d_2=r_1.$
\end{proof}

\section{Acknowledgements}
We thank János Kollár and Jakub Witaszek for giving valuable feedback.
\printbibliography

\vspace{30pt}
\begin{flushleft}
{\scshape Department of Mathematics, Shiv Nadar University, NH91, Tehsil
Dadri, Greater Noida, Uttar Pradesh 201314, India}.

{\fontfamily{cmtt}\selectfont
\textit{Email address: ashima.bansal@snu.edu.in} }

\vspace{10pt}

{\scshape Department of Mathematics, Fine Hall, Princeton University, Princeton, NJ 08540, USA}.

{\fontfamily{cmtt}\selectfont
\textit{Email address: ss6663@princeton.edu} }

{\fontfamily{cmtt}\selectfont
\textit{Email address: shivamvatsaaa@gmail.com} }

\end{flushleft}
\end{document}